\newtheorem{theorem}{Theorem}[section]
\newtheorem{lemma}[theorem]{Lemma}
\newtheorem{proposition}[theorem]{Proposition}
\newtheorem{cor}[theorem]{Corollary}
\newtheorem{definition}[theorem]{Definition}
\newtheorem{remark}[theorem]{Remark}
\newcommand{\ra}{\rightarrow}
\def\nn{\nonumber}
\def\T{\mathbb T}
\def\C{{\mathbb C}}
\def\N{{\mathbb N}}
\def\R{{\mathbb R}}
\def\Z{{\mathbb Z}}
\def\la{\langle}
\def\ra{\rangle}
\def\les{\lesssim}
\def\1{{\bf 1}}
\def\eqnn{\begin{eqnarray*}}
\def\eeqnn{\end{eqnarray*}}
\def\eqn{\begin{eqnarray}}
\def\eeqn{\end{eqnarray}}
\newcommand{\nc}{\newcommand}
\nc{\be}{\begin{equation}}
\nc{\ee}{\end{equation}}
\nc{\ba}{\begin{eqnarray}}
\nc{\ea}{\end{eqnarray}}
\nc{\eps}{\epsilon}
\def\prf{\begin{proof}}
\def\endprf{\end{proof}}
\begin{document}

\title[Fractal solutions of dispersive PDE]{Fractal solutions of linear and nonlinear dispersive partial differential equations}

\author{V. Chousionis}
\address{Vasilis Chousionis, Department of Mathematics and Statistics \\ University of Helsinki \\ P. O. Box 68 \\ FI-00014, Finland}
\email{vasileios.chousionis@helsinki.fi}
\author{M. B. Erdo\u gan}
\address{M. Burak Erdo\u gan, Department of Mathematics \\ University of Illinois \\ 1409
  West Green St. \\ Urbana, IL, 61801}
\email{berdogan@math.uiuc.edu}
\author{N. Tzirakis}
\address{Nikolaos Tzirakis, Department of Mathematics \\ University of Illinois \\ 1409
  West Green St. \\ Urbana, IL, 61801}
\email{tzirakis@math.uiuc.edu}

\thanks{V.C. is supported  by the Academy of Finland Grant SA 267047. M.B.E.  is partially supported by the NSF grant DMS-1201872}

\date{}

\begin{abstract}
In this paper we  study fractal solutions of linear and nonlinear dispersive PDE on the torus. In the first part we answer some open questions  on the fractal solutions of linear Schr\"odinger equation and equations with higher order dispersion. We also discuss applications to their nonlinear counterparts like the cubic Schr\"odinger equation (NLS) and the Korteweg-de Vries equation (KdV). 

In the second part,   we study fractal solutions of the vortex filament equation and the associated Schr\"odinger map  equation (SM). In particular, we construct
global strong solutions of the SM in $H^s$ for $s>\frac32$ for which the evolution of the curvature   is given by a periodic nonlinear Schr\"odinger evolution. We also construct unique weak solutions in the energy level.  Our analysis follows the frame construction of Chang {\em et  al.} \cite{csu} and Nahmod {\em et al.} \cite{nsvz}.
\end{abstract}

\maketitle

\section{Introduction}

In this paper we continue the study of  fractal solutions of linear and nonlinear dispersive PDE on the torus that was initiated in \cite{erdtzi1}. We present dispersive quantization effects that were observed numerically for discontinuous initial data, \cite{chenolv}, in a large class of dispersive PDE, and in certain geometric equations \cite{hozvega}. A physical manifestation of these phenomena started with an optical experiment of Talbot \cite{talbot} which today is referred in the literature as the Talbot effect.
Berry with his collaborators (see, e.g., \cite{mber, berklei,berlew, bermar})   studied the Talbot effect in a series of papers.
In particular, in \cite{berklei},  Berry and Klein  used the linear Schr\"odinger evolution to model the Talbot effect. Also in  \cite{mber}, Berry  conjectured that  for the $n-$dimensional linear Schr\"odinger equation confined in a box
 the graphs of the imaginary part $\Im u(x,t)$, the real part $\Re{u(x,t)}$ and the density $|u(x,t)|^2$ of the solution are fractal sets with dimension $D=n+\frac{1}{2}$ for most irrational times. 
We should also note that in \cite{ZWZX} the Talbot effect was observed experimentally in a nonlinear setting.    

The first mathematically rigorous work in this area appears to be due to Oskolkov. In \cite{osk}, he studied  a large class of linear dispersive equations with bounded variation initial data.    In the case of the linear Schr\"odinger equation, he proved that  at irrational times the solution is a continuous function  of $x$ and at rational times it is a bounded function with at most countably many discontinuities.
The idea that the profile of linear dispersive equations depends on the algebraic properties of time was further investigated by Kapitanski-Rodniaski  \cite{rodkap}, Rodnianski  \cite{rod}, and Taylor  \cite{mtay}.  
Dispersive quantization results have also been observed on higher dimensional spheres and  tori \cite{mtay2}. In \cite{mtay}, Taylor  noted that the quantization implies the $L^p$ boundedness of the multiplier $e^{it\Delta}$ for rational values of $\frac{t}{2\pi}$. It is  known that, \cite{mtay}, the propagator is unbounded in $L^p$ for $p\neq 2$ and $\frac{t}{2\pi}$ irrational.  This can be considered as another manifestation of the Talbot effect.

In \cite{erdtzi1}, the second and third authors investigated the Talbot effect for  cubic  nonlinear  Schr\"odinger equation (NLS) with periodic boundary conditions.
The goal was to extend Oskolkov's and Rodnianski's results for bounded variation data to the NLS evolution, and  provide rigorous confirmation of some numerical observations in \cite{olv,chenolv,chenolv1}. We recall the main theorem of \cite{erdtzi1}:

\noindent
{\bf Theorem A.} \cite{erdtzi1}
{\it Consider the nonlinear Schr\"odinger equation on the torus
\begin{align*}
&iu_t+u_{xx}+|u|^2u=0,\,\,\,\,\,\, t\in \R,\,\,\,x\in\T=\R/2\pi\Z,\\
&u(x,0)=g(x).
\end{align*}
Assuming that $g$ is of bounded variation, we have\\
i)  $u(x,t)$ is a continuous function of $x$ if $\frac{t}{2\pi}$ is an irrational number. For rational values of $\frac{t}{2\pi}$, the solution is a bounded function with at most countably many discontinuities.  Moreover, if $g$ is also continuous  then $u\in C^0_tC^0_x$.\\
ii) If in addition $g\not \in \bigcup_{\epsilon>0}H^{\frac12+\epsilon}$,
then for almost all times    either the real part or the imaginary part of the graph of
$u(\cdot,t)$ has upper Minkowski dimension $ \frac32$.}

\vspace{3mm}

We note that the simulations in \cite{olv,chenolv,chenolv1} were performed in the case when $g$ is a step function, and that Theorem A applies in that particular case. The proof of Theorem A relies on a smoothing estimate for NLS stating that the nonlinear Duhamel part of the evolution is smoother than the linear part by almost half a derivative. For bounded variation data, this immediately yields the upper bound on the dimension of the curve.  The lower bound is obtained by combining the smoothing estimate with Rodnianski's result in \cite{rod},
and an observation from \cite{DJ} connecting smoothness and geometric dimension. We remark that the first part of 
Theorem A was observed in \cite{erdtzi} in the case of KdV equation.

In this article we first show how one can obtain the same theorem as above for both the real part and the imaginary part of the graph of
$e^{it\partial_{xx}} g $.  Then we prove that the linear Schr\"odinger evolution gives rise to fractal curves even for smoother data. In particular we show that if the initial data is of bounded variation but do not belong in $\bigcup_{\epsilon>0}H^{r_0+\epsilon}$ for some $r_0\in[\frac12,\frac34) $, then for almost all $t$ both  the real part and the imaginary part of the graph of
$e^{it\partial_{xx}} g $ have upper Minkowski dimension $D\in [\frac52-2r_0,\frac32]$.   Notice that  for $r_0\in[\frac12,\frac34) $, $1<\frac52-2r_0\leq \frac32$. These results apply to NLS evolution as in Theorem A above.
Our next theorem addresses Berry's conjecture regarding the fractal dimension of the density of the linear Schr\"odinger equation $|e^{it\partial_{xx}} g|^2$. Although we are unable to prove the statement for general bounded variation initial data we nevertheless prove the dimension statement for step function data with jumps only at rational points. Our result confirm the numerical simulations that have appeared in the literature.  We also note that our theorem implies the same statement for the absolute value of the solution $|e^{it\partial_{xx}} g|$.

The numerical simulations in Olver \cite{olv}, and  Chen and Olver  \cite{chenolv,chenolv1}  validated the Talbot effect for a large class of dispersive equations, both linear and nonlinear. In the case of polynomial dispersion, they numerically confirmed the rational/irrational dichotomy discussed above. This behavior persists for both integrable and nonintegrable systems. Our next theorem addresses exactly this problem. We consider for any $k \geq 3$ the following linear dispersive class of PDE:
\begin{align*}
&iu_t+(-i\partial_x)^ku=0,\,\,\,\,\,\, t\in \R,\,\,\,x\in\T=\R/2\pi\Z,\\
&u(x,0)=g(x)\in BV.
\end{align*}
We prove that for almost all $t$ both  the real part and the imaginary part of the graph of
$e^{it (-i \partial_x)^k}g $ is a fractal curve with upper Minkowski dimension $D\in [1+2^{1-k},2-2^{1-k}]$. In particular the  upper Minkowski dimension $D\in [\frac54,\frac74]$ for almost every $t$ in the case of the Airy equation ($k=3$). The dimension bounds are also valid for the KdV evolution, see below.

An important question that the authors raised in \cite{chenolv,chenolv1} is the appearance of such phenomena in the case of nonpolynomial dispersion relations. Their numerics demonstrate that the large wave number asymptotics of the dispersion relation plays the dominant role governing the qualitative features of the solutions. We will address these phenomena in future work. Here we just want to note that smoothing estimates for fractional Schr\"odinger equations have already been proved in \cite{det}.

In the second part of our paper we investigate fractal solutions of the vortex filament equation (VFE):
\begin{equation}\label{vfe}
\gamma_{t}=\gamma_{x}\times \gamma_{xx}, 
\end{equation}
where $\gamma:\R\times \Bbb K\rightarrow \R^3$, satisfies $|\gamma_x|=1$, i.e.  $x$ is an arc-length parameter. Here  the field $\Bbb K$ can be either $\R$ or the torus $\T$. This equation was first  discovered by Da Rios, \cite{dr}, and it models the dynamics of an isolated thin vortex embedded in a homogeneous, incompressible, inviscid fluid. Da Rios wanted to study the influence that the localized vorticity has on the local and global behavior of the vortex, \cite{dr}. In this model the velocity of the vortex is proportional to its local curvature (thus smaller rings move faster). 

VFE is connected to the cubic NLS through Hasimoto's transformation \cite{hasi}. Hasimoto coupled the curvature and torsion of the filament into one complex variable and derived a nonlinear Schr\"odinger equation that governs the dynamics of the vortex filament. We should note that VFE in $H^s$ level formally corresponds to NLS in $H^{s-2}$ level.

Recently in  \cite{hozvega},   De la Hoz and Vega considered solutions of the VFE with initial data a regular planar polygon. Formally, at the NLS level this corresponds to initial data represented as a sum of delta functions with appropriate weights. Using algebraic techniques, supported
by numerical simulations, they demonstrated that $u$ is also a
polygonal curve at any rational time. They also studied numerically the fractal behavior for irrational times. For example their simulations demonstrate that the stereographic projection of the unit tangent vector at an irrational time is a fractal like curve. 

Since NLS is known to be ill-posed below the $L^2$ level, it appears that a rigorous justification of the observations  in \cite{hozvega} is out of reach. Instead, in this paper we prove that the VFE has solutions with some fractal behavior even when the data is much smoother then a polygonal curve.  
For example for $C^1(\T)$ initial data which is a planar curve of piecewise constant curvature, we prove that the curvature vector $\gamma_{xx}=\kappa N$ of the filament   has fractal coordinates with respect to a frame, see the discussion below.

To construct solutions of VFE we consider the Schr\"odinger map equation (SM)
\begin{equation}\label{sm}
u_{t}=u\times u_{xx},
\end{equation}
where $u=\gamma_x: \R \times \Bbb K\rightarrow S^2$, where $S^2$ is the unit sphere in $\R^3$. The SM  has a long history and can be derived as a model on the effects of a magnetic field on ferromagnetic materials. A ferromagnetic material can be viewed as a collection of atoms each with a well defined magnetic moment which interact with its neighbors. The dynamics of the magnetic moment (spin) are governed by 
$$u_t=u\times F$$
where $F=-\frac{\delta E}{\delta u}$. If the ferromagnetic energy is $E(u)=\frac{1}{2}\int_{\Omega} |\nabla u|^2$ then the equation takes the form $u_t=u\times \Delta u$.

In the case of $\Bbb K=\T$, since $u$ is the derivative of the curve with respect to arc-length, it has mean zero:
$$\int_{\Bbb K}u(x)dx=0.$$
In addition, since $|u|=1$ for each $x$, $\|u\|_{L^2(\T)}=2\pi$ for all times. Moreover, noting that
$$\partial_{t} \int_{\T}u_x \cdot u_x\ dx=2\int_{\T}u_{tx} \cdot u_x\ dx=-2\int_{\T}u_t\cdot u_{xx}\ dx=-2\int_{\T}(u\times u_{xx})\cdot u_{xx}\ dx=0,$$
we see that the smooth solutions of the SM have constant $H^1(\T)$ norm:
$$\|u(t)\|_{H^{1}(\T)}=\|u_0\|_{H^{1}(\T)}.$$

Note that the problem is $H^{\frac{n}{2}}$ critical and thus in 1d it is energy sub-critical.
Many results have been established for the SM   from $\R^n$ to $S^2$.  We cannot summarize all of them here but we should mention  a recent result, \cite{bikt}, proving small data global well-posedness in the critical space for any $n\geq 2$. For the 1d case when the base is $\R$ or the torus $\T$ and the target is the sphere the best result is the global well-posedness in $H^2$. For the real line, existence was proved in \cite{ding} for large $H^2(\R)$ data, while uniqueness was shown in $H^3(\R)$. Uniqueness in $H^2(\R)$ was proved by Chang, Shatah and Uhlenbeck \cite{csu}, also see \cite{nsvz} for further clarifications. The strategy used in  \cite{csu} and \cite{nsvz} is to write the derivative of the  solution in a special orthonormal frame in the tangent plane in which the equation turns out to be of NLS type.
For the global well-posedness in $H^2(\T)$, see \cite{rrs}, which also addresses SM from Riemanian manifolds to K\"ahler manifolds.

Since the cubic NLS is well-posed  in $L^2$, heuristically the SM should be well-posed in $H^1$. However, this is still open, since translating the estimates available for the NLS to   the SM is nontrivial.  
Recall that for the periodic cubic NLS we observed fractal  solutions for data in $H^s$ level, $s<\frac34$. Thus,    in principle, we expect to have fractal solutions for the SM at the $H^s$ level for $s<\frac74$. To do that we restrict ourselves to mean zero and identity holonomy initial data on the torus. By identity holonomy we mean that the parallel transport around the curve $u_0(\T)$ is the identity map on the tangent space $T_{u_0(0)}S^2$.  By the Gauss-Bonnet theorem, for smooth curves, this is equivalent to the condition that the area enclosed by the curve counting multiplicities in $S^2$ is an integer multiple of $2\pi$. In particular, planar initial data  are always identity holonomy. 

To study the well posedness of the SM we use the frame construction in \cite{csu} and \cite{nsvz}, which converts SM to a simpler system of ODE. We note that for smooth and identity holonomy data the frame is also $2\pi$-periodic and the coefficients of $u_x$ in the frame evolves according to NLS on $\T$.  With the help of the system and the conservation laws of the SM we obtain  a unique solution  as a strong limit of smooth solutions and prove that the SM is globally well posed in $H^s(\T)$ for any $s>\frac32$. Moreover the coefficients of the curvature vector $ u_x $ with respect to the frame are  given by the real and imaginary parts of a function $q\in L^\infty_tH^{s-1}_x$ which solves NLS on $\T$.

 We also address the problem of the uniqueness of the weak solutions of the SM. 
For weak solutions of SM see the paper \cite{JS} and the references therein.
The construction of weak solutions in the energy space was proved in \cite{SSB}, also see the discussion in \cite{JS}.  
Certain uniqueness statements (under assumptions on NLS evolution on $\R$) were obtained in \cite{nsvz}. We obtain unique weak solutions in $H^s(\T)$ for $s\geq 1$ and for identity holonomy and mean zero data. These solutions  are weakly continuous in $H^s$ and continuous in $H^r$ for $r<s$. Moreover, for $s>1$ the curvature vector $u_x$ is given by a NLS evolution in $H^{s-1}$ level as we noted above.

\section{Notation}
To avoid the use of multiple constants, we  write $A \lesssim B$ to denote that there is an absolute  constant $C$ such that $A\leq CB$. We define $\langle \cdot\rangle =1+|\cdot|$. 

We define the Fourier sequence of a $2\pi$-periodic $L^2$ function $u$ as
$$\hat{u}(k)=u_k=\frac1{2\pi}\int_0^{2\pi} u(x) e^{-ikx} dx, \,\,\,k\in \mathbb Z.$$
With this normalization we have
$$u(x)=\sum_ke^{ikx}u_k,\,\,\text{ and } (uv)_k=u_k*v_k=\sum_{m+n=k} u_nv_m.$$
We will also use the notation:
$$
P_0u=u_0=\frac1{2\pi} \int_\T u.
$$

Note that for a mean-zero $L^2$ function $u$, $\|u\|_{H^{s}}=\|u\|_{\dot H^{s}}\approx \|\widehat u(k) |k|^{s}\|_{\ell^2}$.

Similarly, for $u:\T\to \R^3$, the Fourier coefficients are $u_k=(u_{1,k},u_{2,k},u_{3,k})$, and 
$$\|u\|_{L^2}=\Big(\int_\T u\cdot u \, dx \Big)^{1/2},\,\,\,\,\,\,\,\,\,\,\|u\|_{H^1}= \|\partial_x u\|_{L^2} +\|u\|_{L^2}.$$
For general $s$ we have
$$\|u\|_{H^s}^2\approx \sum_k \la k\ra^{2s} u_k\cdot u_k.$$

The upper Minkowski (also known as fractal) dimension, $\overline{\text{dim}}(E)$, of a bounded set $E$ is given by $$\limsup_{\epsilon\to 0}\frac{\log({\mathcal N}(E,\epsilon))}{\log(\frac1\epsilon)},
$$
where ${\mathcal N}(E,\epsilon)$ is the minimum number of $\epsilon$--balls required to cover $E$.

Finally, by local and global well-posedness we mean the following. 
\begin{definition} We say the equation  is locally well-posed in $H^s$, if there exist a time $T_{LWP}=T_{LWP}(\|u_0\|_{H^s})$ such that the solution exists and is unique in $X_{T_{LWP}}\subset C([0,T_{LWP}),H^s)$ and depends continuously on the initial data. We say that the the equation is globally well-posed when $T_{LWP}$ can be taken arbitrarily large.
\end{definition}

\section{Fractal solutions of dispersive PDE on $\T$}
We will start with the linear Schr\"odinger evolution $e^{it\partial_{xx}}g$. The following theorem is a variant of  the results in \cite{osk} and \cite{rod}: 
\begin{theorem}\label{osk_rod}
Let  $g:\T \to \C $ be of bounded variation. Then $e^{it\partial_{xx}}g$ is a continuous function of $x$ for almost every $t$. Moreover if in addition  $g\not \in \bigcup_{\epsilon>0}H^{r_0+\epsilon}$ for some $r_0\in[\frac12,\frac34) $, then for almost all $t$ both  the real part and the imaginary part of the graph of
$e^{it\partial_{xx}} g $ have upper Minkowski dimension $D\in [\frac52-2r_0,\frac32]$. In particular, for $r_0=\frac12$, $D=\frac32$.
\end{theorem}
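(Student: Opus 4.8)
The plan is to recast the dimension statement as a box-counting estimate and to control the graph oscillations of $u(x,t)=e^{it\partial_{xx}}g=\sum_k g_k e^{i(kx-k^2t)}$ using the bounded-variation decay $|g_k|\lesssim 1/|k|$ together with the arithmetic structure of the evolution at rational times. Writing $\delta=1/N$ and splitting $\T$ into $N$ arcs $I_j$ of length $\delta$, the number of $\delta$-boxes needed to cover the graph of $\Re u(\cdot,t)$ is comparable to $N+N\sum_j\text{osc}_j(\Re u(\cdot,t))$, so by the smoothness–dimension dictionary of \cite{DJ} it is enough to locate the growth rate of $\sum_j\text{osc}_j$ between $N^{3/2-2r_0}$ and $N^{1/2}$. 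Continuity for a.e.\ $t$ is inherited from \cite{osk,rod}: when $t/2\pi$ is irrational, Weyl/van der Corput bounds on $\sum_{M<|k|\le 2M}e^{i(kx-k^2t)}$, summation by parts, and $|g_k|\lesssim 1/|k|$ give uniform convergence of the partial sums, and the rational times form a null set.

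For the upper bound $D\le\frac32$ I would split $u=u_{\le N}+u_{>N}$ at frequency $N$. The low part is smooth with $\|\partial_x u_{\le N}\|_{L^2}^2\lesssim\sum_{|k|\le N}1\lesssim N$, so $\sum_j\text{osc}_j(u_{\le N})\le\|\partial_x u_{\le N}\|_{L^1}\lesssim N^{1/2}$ uniformly in $t$; one also checks $\|u(\cdot+h,t)-u(\cdot,t)\|_{L^2_x}^2\lesssim h^2\sum_{|k|\le 1/h}1+\sum_{|k|>1/h}|g_k|^2\lesssim |h|$, i.e.\ $u(\cdot,t)\in B^{1/2}_{2,\infty}$ uniformly in $t$. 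The tail $\sum_j\sup_{I_j}|u_{>N}(\cdot,t)|$ cannot be handled pointwise in $t$, so here I would integrate: using the orthogonality of the phases $e^{-ik^2t}$ one bounds $\int_0^{2\pi}\big(\sum_j\sup_{I_j}|u_{>N}|\big)^2\,dt\lesssim N^{1+\epsilon}$, and a Borel--Cantelli argument along $N=2^n$ (intersected over rational $\epsilon\to 0$) gives $\sum_j\text{osc}_j(\Re u(\cdot,t))\lesssim N^{1/2+\epsilon}$ for a.e.\ $t$, hence $D\le\frac32$. This part is essentially Rodnianski's theorem, of which the statement is a variant.

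The lower bound is the heart of the matter. To obtain the conclusion for the real and imaginary parts at once, I would use that $\Im u=\Re\big(e^{it\partial_{xx}}(-ig)\big)$ and that $-ig$ is again of bounded variation outside $\bigcup_{\epsilon>0}H^{r_0+\epsilon}$, so it suffices to bound $\sum_j\text{osc}_j(\Re u)$ from below for an arbitrary such datum. The point is that the Fourier coefficient of $\Re u$ at frequency $k$ has modulus squared $\tfrac14\big(|g_k|^2+|g_{-k}|^2+2\Re(g_kg_{-k}e^{-2ik^2t})\big)$, whose average over $t\in[0,2\pi]$ is $\tfrac14(|g_k|^2+|g_{-k}|^2)$; thus the cross terms oscillate away and the time-averaged high-frequency energy $\|P_{>N}\Re u\|_{L^2}^2$ is comparable to $\|P_{>N}g\|_{L^2}^2$ with no cancellation, and the two parts decouple. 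Since $g\notin\bigcup_{\epsilon>0}H^{r_0+\epsilon}$, there is a sequence $N_l\to\infty$ with $\|P_{>N_l}g\|_{L^2}\gtrsim N_l^{-r_0-\epsilon}$.

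It remains to convert this $L^2$ information into a lower bound of size $N^{3/2-2r_0}$ for the $\ell^1$ oscillation sum. The trivial $\sum_j\text{osc}_j\ge(\sum_j\text{osc}_j^2)^{1/2}\gtrsim N^{1/2}\|P_{>N}g\|_{L^2}$ only yields $N^{1/2-r_0}$, which is too weak; the gain must come from the fact that, for a.e.\ $t$, the oscillations are spread over many arcs. Here I would follow \cite{rod}: approximate $t/2\pi$ by rationals $a/q$ (with $q$ in a range that, by metric number theory, is admissible infinitely often for a.e.\ $t$), and use the Talbot/Gauss-sum identity expressing $e^{2\pi i(a/q)\partial_{xx}}g$ as a superposition of $\sim q$ shifted copies $g(\cdot-2\pi m/q)$ with coefficients of modulus $\sim q^{-1/2}$. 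This replicates the rough frequency content of $g$ into $\sim q$ oscillations at spatial scale $\sim 1/q$; controlling the error $\|u(\cdot,t)-u(\cdot,2\pi a/q)\|$ by means of the bounded-variation decay and $g\notin H^{r_0+\epsilon}$ then yields $\sum_j\text{osc}_j(\Re u(\cdot,t))\gtrsim N_l^{3/2-2r_0}$, so $D\ge\frac52-2r_0$. The main obstacle is exactly this last step: the error from the non-exactly-rational time $t$ touches the frequencies ($\sim q$) responsible for the scale-$1/q$ oscillation, so one must balance the quality $|t/2\pi-a/q|$ available for a.e.\ $t$ against the size of the replicated main term — and it is this optimization that forces both the exponent $\frac52-2r_0$ and the restriction $r_0<\frac34$.
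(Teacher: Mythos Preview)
Your treatment of continuity and of the upper bound $D\le \tfrac32$ is workable, though more laborious than needed. The paper obtains, via Weyl's inequality and the Khinchin--L\'evy theorem on the growth of continued-fraction denominators, the single estimate $\sup_x\big|\sum_{n=1}^N e^{-itn^2+inx}\big|\lesssim N^{1/2+\epsilon}$ for a.e.\ $t$; summation by parts against the BV measure $dg$ then gives directly $e^{it\partial_{xx}}g\in \bigcap_{\epsilon>0}B^{1/2-\epsilon}_{\infty,\infty}(\T)\cap C^0(\T)$, which already encodes both continuity and the $3/2$ upper bound. Your frequency splitting plus Borel--Cantelli reproduces a weaker version of this, but you never actually arrive at the $B^{1/2-}_{\infty,\infty}$ membership, and that membership is precisely what drives the lower bound in the paper.

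The lower-bound argument you sketch has a genuine gap, and the approach is not the one that works here. Rational approximation and the Gauss-sum/Talbot replication identity are what Rodnianski used for $r_0=\tfrac12$, but you yourself note that the step ``controlling the error \dots then yields $\sum_j\mathrm{osc}_j\gtrsim N^{3/2-2r_0}$'' is the main obstacle---and indeed you do not carry it out. The paper bypasses this entirely by a Besov interpolation trick. First, a short lemma shows that for a.e.\ $t$ both $\Re e^{it\partial_{xx}}g$ and $\Im e^{it\partial_{xx}}g$ fail to lie in $H^r$ for any $r>r_0$: one expands $\sum_{k\le K}k^{2r}|e^{-itk^2}\widehat g(k)+e^{itk^2}\overline{\widehat g(-k)}|^2$, notes the diagonal part diverges, and shows the cross term $\sum_k k^{2r}e^{-2itk^2}\widehat g(k)\widehat g(-k)$ converges in $L^2_t$ (hence a.e.\ along a subsequence) thanks to the BV decay $|\widehat g(k)|\lesssim |k|^{-1}$ and the restriction $r<(r_0+1)/2$. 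Then one invokes the elementary embedding
\[
B^{r_1}_{1,\infty}(\T)\cap B^{r_2}_{\infty,\infty}(\T)\subset H^r(\T),\qquad r_1+r_2>2r.
\]
Since $\Re e^{it\partial_{xx}}g\in B^{1/2-\epsilon}_{\infty,\infty}$ (from the upper-bound step) but $\notin H^{r_0+\epsilon}$, it cannot lie in $B^{2r_0-1/2+\epsilon}_{1,\infty}$ for any $\epsilon>0$, and the Deliu--Jawerth theorem immediately gives $D\ge 2-(2r_0-\tfrac12)=\tfrac52-2r_0$. The restriction $r_0<\tfrac34$ is simply the requirement that this lower bound exceed $1$. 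No rational-time analysis enters the lower bound at all; the whole point is to recycle the $B^{1/2-}_{\infty,\infty}$ information already obtained.
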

Before we prove this theorem we need the following lemma. 
\begin{lemma}\label{real_imag}
Let $g:\T \to \C $ be of bounded variation. Assume that $r_0:=\sup\{s:g\in H^s\}\in [\frac12,1)$  Then for almost every $t$, both the real and imaginary parts of $e^{it\partial_{xx}}g$ do not belong to $H^r$ for $r>r_0$.
\end{lemma}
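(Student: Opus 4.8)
The plan is to work entirely on the Fourier side and reduce the problem to a second-moment (orthogonality) estimate in the time variable. Writing $e^{it\partial_{xx}}g=\sum_k e^{ikx}e^{-itk^2}g_k$, the Fourier coefficients of the real and imaginary parts of $u(\cdot,t)$ are $a_k(t)=\frac12(e^{-itk^2}g_k+e^{itk^2}\overline{g_{-k}})$ and $b_k(t)=\frac1{2i}(e^{-itk^2}g_k-e^{itk^2}\overline{g_{-k}})$. A direct computation gives the key identity $|a_k(t)|^2=\frac14(|g_k|^2+|g_{-k}|^2)+\frac12\Re(e^{-2itk^2}c_k)$ and $|b_k(t)|^2=\frac14(|g_k|^2+|g_{-k}|^2)-\frac12\Re(e^{-2itk^2}c_k)$, where $c_k:=g_kg_{-k}$ satisfies $c_k=c_{-k}$. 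Since $e^{it\partial_{xx}}$ is an isometry on every $H^s$, we have $u(\cdot,t)\notin H^r$ for every $t$ and every $r>r_0$; the content of the lemma is that the two real pieces are \emph{individually} rough for a.e.\ $t$. It suffices to fix $r\in(r_0,1)$, show that each of $\{t:\Re u(\cdot,t)\in H^r\}$ and $\{t:\Im u(\cdot,t)\in H^r\}$ is null, and then intersect over a sequence $r_n\downarrow r_0$ (using $H^{r'}\subset H^{r_n}$ for $r'\ge r_n$).

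For fixed $r\in(r_0,1)$ set $P_N=\frac12\sum_{|k|\le N}\langle k\rangle^{2r}|g_k|^2$ and $Q_N(t)=\frac12\sum_{|k|\le N}\langle k\rangle^{2r}\Re(e^{-2itk^2}c_k)$, so that the truncated $H^r$ sums of the real and imaginary parts are $S_N^{\Re}(t)=P_N+Q_N(t)$ and $S_N^{\Im}(t)=P_N-Q_N(t)$. Each $S_N$ is continuous in $t$ and increasing in $N$, so the limits $S_\infty^{\Re},S_\infty^{\Im}$ are measurable, $\Re u(\cdot,t)\in H^r\iff S_\infty^{\Re}(t)<\infty$, and $P_N\to\infty$ because $g\notin H^r$. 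Everything is $\pi$-periodic in $t$ (as $e^{-2i\pi k^2}=1$), so it is enough to work on $[0,\pi]$. Suppose for contradiction that $E=\{t\in[0,\pi]:\Re u(\cdot,t)\in H^r\}$ has positive measure; choosing a level $M$ with $|E'|>0$, where $E'=\{t\in E:S_\infty^{\Re}(t)\le M\}$, we get $\int_{E'}S_N^{\Re}\le M|E'|$, hence $P_N|E'|\le M|E'|+\big|\int_{E'}Q_N\big|$.

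The orthogonality relation $\int_0^\pi e^{-2it(k^2-k'^2)}\,dt=\pi\,\1[k^2=k'^2]$ together with $c_k=c_{-k}$ yields $\int_0^\pi|Q_N|^2\,dt\lesssim R_N:=\sum_{|k|\le N}\langle k\rangle^{4r}|c_k|^2$. Here the decisive estimate is $R_N\lesssim P_N$: by bounded variation $|g_{-k}|\lesssim \langle k\rangle^{-1}$, so $\langle k\rangle^{4r}|c_k|^2\lesssim \langle k\rangle^{4r-2}|g_k|^2\le\langle k\rangle^{2r}|g_k|^2$ precisely because $r\le 1$. Feeding $\big|\int_{E'}Q_N\big|\le|E'|^{1/2}(\int_0^\pi|Q_N|^2)^{1/2}\lesssim|E'|^{1/2}P_N^{1/2}$ into the previous display gives $P_N|E'|\lesssim M|E'|+|E'|^{1/2}P_N^{1/2}$, i.e.\ $P_N^{1/2}|E'|^{1/2}\lesssim M|E'|^{1/2}P_N^{-1/2}+C$, which is impossible as $N\to\infty$ since $P_N\to\infty$ and $|E'|>0$. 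Thus $E$ is null; the identical argument with $Q_N$ replaced by $-Q_N$ handles the imaginary part.

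The main obstacle is the inequality $R_N\lesssim P_N$, which is what makes the scheme close for the full range $r_0\in[\frac12,1)$ rather than only $r<\frac34$ (the range in which $\int_0^\pi|Q_N|^2$ stays bounded and one could instead argue by a.e.\ convergence of $Q_N$). It is exactly here that the two hypotheses are consumed: bounded variation supplies the decay $|g_k|\lesssim\langle k\rangle^{-1}$, and $r<1$ makes $\langle k\rangle^{4r-2}\le\langle k\rangle^{2r}$. The remaining care is measure-theoretic—checking measurability of $S_\infty^{\Re}$, reducing to a positive-measure level set $E'$, and assembling the a.e.\ statement from the countable family $r_n\downarrow r_0$—but these are routine once the second-moment bound is in place.
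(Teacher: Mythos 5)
Your proof is correct. It starts from the same Fourier-side identity as the paper: the truncated $H^r$ sum of $\Re(e^{it\partial_{xx}}g)$ splits into the divergent diagonal piece $P_N=\frac12\sum_{|k|\le N}\langle k\rangle^{2r}|g_k|^2$ plus the oscillatory cross term $Q_N(t)$ built from $c_k=g_kg_{-k}$, and it consumes the same two ingredients, namely the BV decay $|g_k|\lesssim\langle k\rangle^{-1}$ and the $L^2_t$ orthogonality of $e^{-2itk^2}$. Where you genuinely diverge is the endgame. The paper shows that the series $\sum_k\langle k\rangle^{2r}e^{-2itk^2}c_k$ converges in $L^2_t$ (hence a.e.\ along a subsequence), which requires the absolute bound $\sum_k\langle k\rangle^{4r}|c_k|^2<\infty$ and therefore the preliminary reduction to $r<\frac{r_0+1}{2}$, with the full range recovered afterwards from the monotonicity $H^{r'}\subset H^{r}$. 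You instead run a second-moment contradiction on a hypothetical positive-measure set: you only need $\int_0^\pi|Q_N|^2\lesssim R_N\lesssim P_N$, so that Cauchy--Schwarz gives $\bigl|\int_{E'}Q_N\bigr|\lesssim|E'|^{1/2}P_N^{1/2}=o(P_N)$, and the divergence of $P_N$ cannot be cancelled. This buys you the statement directly for every $r\in(r_0,1)$ in one pass, avoids extracting a.e.-convergent subsequences, and isolates cleanly why $r<1$ (rather than $r<\frac{r_0+1}{2}$) is the natural threshold for the method; the paper's route is shorter to write but hides the extra reduction step. Both arguments are complete; your measure-theoretic bookkeeping (monotonicity of $S_N$ in $N$, the level sets $E'$, the $\pi$-periodicity in $t$, and the final intersection over $r_n\downarrow r_0$) is sound.
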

\begin{proof}
We prove this for the real part, the same argument works for the imaginary part. Also, we can assume that $r < \frac{r_0+1}2$.
It suffices to prove that for a subsequence $\{K_n\}$ of $\N$,
$$
\sum_{k=1}^{K_n} k^{2r} |e^{-itk^2}\widehat{g}(k)+e^{ itk^2}\overline{\widehat{g}(-k)}|^2   \to \infty \text{ for almost every } t.
$$
We have
$$
\sum_{k=1}^{K } k^{2r} |e^{-itk^2}\widehat{g}(k)+e^{ itk^2}\overline{\widehat{g}(-k)}|^2   = \sum_{k=1}^{K } k^{2r} (|\widehat{g}(k)|^2 + |\widehat{g}(-k)|^2 )+ 2\Re \big(\sum_{k=1}^{K } k^{2r} e^{-2itk^2}  \widehat{g}(k)\widehat{g}(-k) \big).
$$
Since the first sum diverges as $K\to \infty$, it suffices to prove that the second sum converges almost everywhere after passing to a subsequence. As such it suffices to prove that it converges in $L^2(\T)$, which immediately follows from  Plancherel as
$$
\sum_{k=1}^\infty k^{4r} |\widehat{g}(k)|^2|\widehat{g}(-k)|^2 \lesssim \sup_k k^{4r-2r_0-2+}  \|g\|_{H^{r_0-}}^2   <\infty.
$$
In the last two inequalities we used the bound $|\widehat{g}(k)|\les |k|^{-1}$ and that  $r < \frac{r_0+1}2$.
\end{proof}
\begin{proof}[Proof of Theorem~\ref{osk_rod}]
Consider 
$$
H_{N,t}(x)=\sum_{0<|n|\leq N} \frac{e^{-itn^2+inx}}{n}=\sum_{n=1}^N  \frac{e^{-itn^2+inx}-e^{-itn^2-inx}}{n}.
$$
Let 
$$
T_{N,t}(x)=\frac1N \sum_{n=1}^N  \big[e^{-itn^2+inx}-e^{-itn^2-inx}\big]
$$
By the summation by parts formula,
$$
\sum_{n=1}^N f_n(g_{n+1}-g_n)=f_{N+1}g_{N+1}-f_1g_1-\sum_{n=1}^{N}g_{n+1}(f_{n+1}-f_n),
$$
with $g_n=(n-1)T_{n-1,t}$ and $f_n=\frac1n$, we have
\begin{align}
\label{hntn}
H_{N,t}(x)=\frac{N}{N+1} T_{N,t}(x) + \sum_{n=1}^N  \frac{T_{n,t}(x)}{n+1}=T_{N,t}(x)+ \sum_{n=1}^{N-1}  \frac{T_{n,t}(x)}{n+1}.
\end{align}

We will use the following well-known results from number theory:
\begin{theorem}\label{weyl}\cite{montgomery} Let $t$ satisfy
\be\label{dirichlet}
\big|\frac{t}{2\pi}-\frac{a}{q}\big|\leq \frac1{q^2} 
\ee
for some integers $a$ and $q$, then
$$
\sup_x \big|\sum_{n=1}^N e^{-itn^2+inx}\big| \lesssim \frac{N}{\sqrt{q}}+\sqrt{N\log q} + \sqrt{q\log q}.
$$
\end{theorem}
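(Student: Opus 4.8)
This is the classical Weyl bound for a quadratic exponential sum, and the natural route is Weyl differencing. Writing $\alpha=\frac{t}{2\pi}$ and $S_N(x)=\sum_{n=1}^N e^{-itn^2+inx}$, I would first square and substitute $h=n-m$, so that the quadratic phase telescopes:
$$
|S_N(x)|^2=\sum_{|h|<N} e^{ihx-ith^2}\sum_{m} e^{-2itmh},
$$
where for each fixed $h$ the inner sum runs over an interval of at most $N$ consecutive integers $m$. Since the $m$-independent prefactor $e^{ihx-ith^2}$ has modulus one, taking absolute values makes everything uniform in $x$, which disposes of the supremum for free; all the content is in the inner sums. Each such sum is geometric with ratio $e^{-2ith}$, so its modulus is $\lesssim\min\bigl(N,\ |\sin(th)|^{-1}\bigr)\lesssim\min\bigl(N,\ \dist(2\alpha h,\Z)^{-1}\bigr)$, where $\dist(\cdot,\Z)$ denotes distance to the nearest integer (using $th/\pi=2\alpha h$). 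The problem thereby reduces to the purely arithmetic estimate
$$
|S_N(x)|^2 \lesssim N+\sum_{0<|h|<N}\min\Bigl(N,\ \frac{1}{\dist(2\alpha h,\Z)}\Bigr),
$$
the isolated $N$ coming from $h=0$.

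The heart of the matter — and the step I expect to be the real obstacle — is bounding this last sum using only the Dirichlet-type hypothesis $|\alpha-a/q|\le q^{-2}$. I would split $0<h<N$ into $\lesssim N/q+1$ blocks of $q$ consecutive integers, and exploit that within one block the values $\dist(2\alpha h,\Z)$ cannot cluster. Indeed, if $h,h'$ lie in the same block then $d=h-h'$ satisfies $0<|d|<q$, so $\dist(2ad/q,\Z)\gtrsim 1/q$ (its denominator in lowest terms is $q$ or $q/2$ according to the parity of $q$) while $|2d(\alpha-a/q)|<1/q$; hence the fractional parts $\{2\alpha h\}$ across a block are spread among the intervals $[j/q,(j+1)/q)$ with only boundedly many per interval. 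Consequently at most a bounded number of $h$ per block have $\dist(2\alpha h,\Z)\lesssim 1/q$ and contribute the capped value $N$, while the remaining terms sum harmonically, $\sum_{1\le j\lesssim q} q/j\lesssim q\log q$. The parity caveat and the factor of $2$ affect only constants. Thus each block contributes $\lesssim N+q\log q$, and summing over the $\lesssim N/q+1$ blocks gives
$$
\sum_{0<|h|<N}\min\Bigl(N,\frac{1}{\dist(2\alpha h,\Z)}\Bigr)\lesssim\Bigl(\frac Nq+1\Bigr)\bigl(N+q\log q\bigr)\lesssim\frac{N^2}{q}+N\log q+q\log q.
$$

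Combining the two displays (and absorbing the stray $N$ into $N\log q$ once $q\ge2$, the case $q=1$ being the trivial bound $|S_N|\le N$) yields $|S_N(x)|^2\lesssim N^2/q+N\log q+q\log q$. Taking square roots and using $\sqrt{a+b+c}\lesssim\sqrt a+\sqrt b+\sqrt c$ produces the stated estimate $\frac{N}{\sqrt q}+\sqrt{N\log q}+\sqrt{q\log q}$, uniformly in $x$. The only genuinely delicate point is the non-clustering/harmonic-sum estimate of the middle paragraph, which is where the rational approximation $a/q$ enters decisively; everything else is bookkeeping around the geometric-series bound.
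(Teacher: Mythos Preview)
The paper does not prove this theorem; it is quoted from Montgomery \cite{montgomery} as a known result (it is the quadratic case of Weyl's inequality). Your argument via Weyl differencing is exactly the classical proof and is correct: squaring, substituting $h=n-m$, bounding the resulting geometric series by $\min\bigl(N,\ \dist(2\alpha h,\Z)^{-1}\bigr)$, and then using the approximation $a/q$ to show the fractional parts $\{2\alpha h\}$ are well spread across each block of $q$ consecutive $h$'s. One small remark: your spreading argument implicitly assumes $\gcd(a,q)=1$ (so that $2ad/q$ has reduced denominator $q$ or $q/2$); this is harmless, since in all the applications in the paper the pair $(a,q)$ comes from the continued fraction convergents, which are always in lowest terms, and in any case one may reduce $a/q$ before running the argument without affecting the shape of the bound.
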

Recall that Dirichlet theorem implies that for every irrational $\frac{t}{2\pi}$, the inequality \eqref{dirichlet} holds for infinitely many integers $a, q$.  Given irrational $\frac{t}{2\pi}$, let $\{q_k\} $ be the increasing sequence of positive $q$'s for which \eqref{dirichlet} holds for some $a$.  We need the following quantitative information on the sequence  $\{q_k\}$.

\begin{theorem}\cite{khin, lev}
 For almost every  $t$, we have $\lim_{k\to\infty} q_k^{1/k} = \gamma$, for some absolute constant $\gamma$ independent of $t$.
\end{theorem}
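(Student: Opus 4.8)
The statement is L\'evy's theorem on the growth of continued-fraction denominators, so the plan is to reduce the Dirichlet denominators $q_k$ to the convergent denominators of $\alpha:=\frac{t}{2\pi}$ and then to invoke the ergodic theory of the Gauss map. Write $\alpha=[0;a_1,a_2,\dots]$ for the continued fraction expansion and $p_n/q_n$ for its convergents, so that $q_n=a_nq_{n-1}+q_{n-2}$. Let $T$ be the Gauss map $Tx=\frac1x-\lfloor\frac1x\rfloor$, for which $T^n\alpha=[0;a_{n+1},a_{n+2},\dots]$ and $a_{n+1}=\lfloor 1/T^n\alpha\rfloor$. The key point is that every $q$ occurring in \eqref{dirichlet} is either a convergent denominator $q_n$ or one of a small number of intermediate (semiconvergent) denominators $mq_n+q_{n-1}$; I would treat the convergents first and reconcile the rest at the end.

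For the convergents I would use the exact identity $\|q_n\alpha\|=\prod_{k=0}^{n}T^k\alpha$ together with the elementary two-sided bound $\frac1{2q_{n+1}}\le\|q_n\alpha\|\le\frac1{q_{n+1}}$, which give
\be
\log q_{n+1}=-\sum_{k=0}^{n}\log(T^k\alpha)+O(1),\qquad\text{hence}\qquad \frac1n\log q_n=-\frac1n\sum_{k=0}^{n-1}\log(T^k\alpha)+o(1).
\ee
The Gauss measure $d\mu=\frac1{\log2}\frac{dx}{1+x}$ is $T$-invariant and ergodic, and $\log x\in L^1(\mu)$, so Birkhoff's ergodic theorem yields, for a.e. $\alpha$,
\be
\frac1n\sum_{k=0}^{n-1}\log(T^k\alpha)\longrightarrow\int_0^1\log x\,d\mu(x)=\frac1{\log2}\int_0^1\frac{\log x}{1+x}\,dx=-\frac{\pi^2}{12\log2}.
\ee
Thus $q_n^{1/n}\to\gamma_0:=e^{\pi^2/(12\log2)}$ for a.e. $\alpha$, a constant independent of $\alpha$; already at this stage ergodicity alone guarantees the limit is an a.e.-constant, the explicit value being a bonus.

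It remains to pass from the convergent denominators to the full increasing sequence $\{q_k\}$ of solutions of \eqref{dirichlet}. For a semiconvergent $q=mq_n+q_{n-1}$ with $1\le m<a_{n+1}$ one has $\|q\alpha\|=\|q_{n-1}\alpha\|-m\|q_n\alpha\|$, and a short computation shows that \eqref{dirichlet}, i.e.\ $\|q\alpha\|\le 1/q$, is equivalent to a quadratic inequality in $m$ whose discriminant equals $1-4q_n\|q_n\alpha\|$; consequently only $O(1)$ values of $m$ qualify at each scale $n$, and this count $s_n$ is a bounded function of $(a_{n+1},\,q_{n-1}/q_n,\,T^{n+1}\alpha)$. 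Writing $C_K$ for the number of genuine convergents among $q_1,\dots,q_K$, one then has $\log q_K=(\log\gamma_0)C_K+o(K)$ and $K=C_K+\sum_{n\le C_K}s_n$; a second application of the ergodic theorem to the bounded observable $s_n$ gives $\frac1{C_K}\sum_{n\le C_K}s_n\to\bar s$ a.e., whence $q_K^{1/K}\to\gamma:=\gamma_0^{1/(1+\bar s)}$, again a universal constant.

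The main obstacle is precisely this last reconciliation. The ergodicity of $T$ and the product formula for $\|q_n\alpha\|$ are standard, but the observable $s_n$ depends on both the past digit ratio $q_{n-1}/q_n$ and the future tail $T^{n+1}\alpha$, so it is not a function of the forward orbit alone. To make $\sum_{n\le C_K}s_n$ a genuine Birkhoff average one has to lift the problem to the natural extension of the Gauss map (the two-sided shift on continued fractions) and verify that $s_n$ is integrable there; this bookkeeping, converting ``number of solutions of \eqref{dirichlet} up to $Q$'' into an ergodic average, is where the real care is needed, whereas the exponential growth rate itself follows cleanly from L\'evy's argument.
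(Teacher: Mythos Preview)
The paper does not prove this statement; it is quoted as a classical theorem of Khinchin and L\'evy and is used only through the corollary that $q_{k+1}\le q_k^{1+\epsilon}$ for all large $k$. Your ergodic-theoretic derivation of L\'evy's theorem for the \emph{convergent} denominators is the standard one and is correct.

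Your reconciliation step, however, rests on a false structural claim. It is not true that every $q$ satisfying \eqref{dirichlet} is a convergent or a semiconvergent $mq_n+q_{n-1}$. Take $q_{n-1}=1$, $q_n=2$, $a_{n+1}=100$ (so $\alpha\approx\tfrac12+\tfrac1{402}$): the solutions of \eqref{dirichlet} in $[q_n,q_{n+1})=[2,201)$ are $q=2,4,6,\dots,20$ and $q=199$, eleven in all, of which only $q=2$ is a convergent and only $q=199$ is a semiconvergent; the nine values $q=4,6,\dots,20$ are multiples of $q_n$ and are neither. In general, whenever $a_{n+1}$ is large the multiples $jq_n$ satisfy \eqref{dirichlet} for all $j\lesssim\sqrt{a_{n+1}}$, since $\|jq_n\alpha\|=j\|q_n\alpha\|\le j/q_{n+1}\le 1/(jq_n)$ precisely when $j^2\le q_{n+1}/q_n\approx a_{n+1}$. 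Thus the count $s_n$ of solutions in $[q_n,q_{n+1})$ is of order $\sqrt{a_{n+1}}$, not $O(1)$, and in particular is not a bounded observable. Your natural-extension strategy can still be made to work, because $\sqrt{a_1}\in L^1$ of the Gauss measure (indeed $\sum_k k^{-3/2}<\infty$), so Birkhoff applies; but the analysis of $s_n$ is more delicate than the semiconvergent computation you carried out, and your nice discriminant formula $1-4q_n\|q_n\alpha\|$, while correct for the semiconvergents, covers only a negligible part of the count.

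Finally, note that none of this is actually needed for the paper. The downstream corollaries require only that consecutive convergent denominators satisfy $Q_{n+1}\le Q_n^{1+\epsilon}$ eventually; since the full Dirichlet sequence $\{q_k\}$ contains the convergent denominators as a subsequence, it automatically inherits the same gap bound. So the classical L\'evy theorem for convergents---your first step---already suffices for the application, and the reconciliation you attempt is superfluous for the paper's purposes even if one wishes to justify the theorem as literally stated.
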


An immediate corollary of this theorem is the following:
\begin{cor}
For almost every $t$, and for any $\epsilon>0$, we have
$$q_{k+1}\leq q_k^{1+\epsilon} $$
for all sufficiently large $k$.
\end{cor}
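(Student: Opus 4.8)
The plan is to pass to logarithms, where the preceding theorem becomes a statement about the linear growth of $\log q_k$, and then to read off the claim from the elementary fact that consecutive terms of a linearly growing sequence have ratio tending to $1$. Throughout I would work on the full-measure set of $t$ furnished by that theorem, so that $q_k^{1/k}\to\gamma$ is available.

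First I would set $L_k=\log q_k$ and record two facts. Since $\{q_k\}$ is a strictly increasing sequence of positive integers it tends to $\infty$, so $L_k\to\infty$ and in particular $L_k>0$ for all large $k$; and taking logarithms in $q_k^{1/k}\to\gamma$ gives $L_k/k\to\log\gamma$. The one input I need beyond the bare limit is that $\gamma>1$, hence $\log\gamma>0$; this holds because $\gamma$ is the Khinchin--L\'evy constant $e^{\pi^2/(12\log 2)}$.

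Next I would rewrite the target inequality $q_{k+1}\le q_k^{1+\epsilon}$ in the equivalent logarithmic form $L_{k+1}\le(1+\epsilon)L_k$, i.e. $L_{k+1}/L_k\le 1+\epsilon$ (a legitimate inequality between positive quantities once $L_k>0$), and then show $L_{k+1}/L_k\to 1$ via the factorization
$$
\frac{L_{k+1}}{L_k}=\frac{L_{k+1}/(k+1)}{L_k/k}\cdot\frac{k+1}{k}.
$$
Both factors converge to $1$: the first because numerator and denominator both tend to the common nonzero limit $\log\gamma$, the second trivially. Given $\epsilon>0$, I would then choose $k$ large enough that $L_{k+1}/L_k\le 1+\epsilon$, which is precisely the assertion.

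The only step requiring any care is the non-degeneracy $\log\gamma\ne 0$: were $\gamma$ equal to $1$, the first factor above would be an indeterminate $0/0$ and the conclusion could genuinely fail (a polynomially growing $\{q_k\}$ would violate the bound). Since L\'evy's theorem supplies $\gamma>1$, this is not a real obstacle, and I expect the whole argument to be short and routine.
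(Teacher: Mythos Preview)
Your argument is correct and is precisely the ``immediate'' deduction the paper has in mind; the paper does not spell out a proof beyond calling the statement an immediate corollary of the Khinchin--L\'evy theorem. Your observation that $\gamma>1$ is genuinely needed (and is supplied by L\'evy's explicit value $e^{\pi^2/(12\log 2)}$, or more crudely by the Fibonacci lower bound $q_k\gtrsim\phi^k$) is the one nontrivial point, and you handle it correctly.
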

This in turn implies that
\begin{cor}\label{khinlev}
For almost every    $t$, for any $\epsilon>0$, and for all sufficiently large $N$, there exists $q\in [N,N^{1+\epsilon}]$ so that \eqref{dirichlet} holds for $q$.
\end{cor}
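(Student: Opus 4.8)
The plan is to deduce this directly from the preceding corollary by a sandwiching argument on consecutive denominators $q_k$. First I would fix $\epsilon>0$ and invoke the previous corollary to obtain, for almost every $t$, an index $k_0=k_0(t,\epsilon)$ such that $q_{k+1}\le q_k^{1+\epsilon}$ for all $k\ge k_0$. Since the $q_k$ form a strictly increasing sequence of positive integers, they tend to infinity; I can therefore set the threshold $N_0=q_{k_0}$ and restrict attention to $N>N_0$, with the understanding that the claim "for all sufficiently large $N$" refers precisely to $N>N_0$.

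Next, given such an $N$, I would let $k$ be the smallest index with $q_k\ge N$, which is well-defined because $q_k\to\infty$. By minimality of $k$ we have $q_{k-1}<N\le q_k$. Moreover $N>q_{k_0}$ forces $q_k\ge N>q_{k_0}$, and since $\{q_k\}$ is strictly increasing this gives $k>k_0$, hence $k-1\ge k_0$. Thus the inequality supplied by the corollary applies with the index $k-1$ and yields $q_k\le q_{k-1}^{1+\epsilon}$.

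Combining $q_{k-1}<N$ with this last bound gives $q_k\le q_{k-1}^{1+\epsilon}<N^{1+\epsilon}$, so that $q_k\in[N,N^{1+\epsilon}]$. Since \eqref{dirichlet} holds for $q=q_k$ by the very definition of the sequence $\{q_k\}$, this $q_k$ is the denominator we seek, and the argument goes through for every $N>N_0$.

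There is essentially no hard step here: the statement is a routine consequence of the subexponential growth $q_{k+1}\le q_k^{1+\epsilon}$ of the denominators together with the fact that consecutive $q_k$ trap any large $N$ between them. The only point requiring a little care is checking that the relevant index $k-1$ is large enough for the corollary to apply, which I arrange by choosing the threshold $N_0=q_{k_0}$ so that $N>N_0$ forces $k>k_0$.
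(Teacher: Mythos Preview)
Your argument is correct and matches the paper's intended reasoning; the paper itself simply writes ``This in turn implies that'' and leaves the details implicit, and you have supplied exactly the natural sandwiching argument between consecutive denominators $q_{k-1}<N\le q_k$ together with $q_k\le q_{k-1}^{1+\epsilon}$. One cosmetic remark: since the previous corollary's exceptional null set of $t$ is independent of $\epsilon$, it is slightly cleaner to first fix $t$ in the full-measure set and then let $\epsilon>0$ be arbitrary, so that the order of quantifiers in the statement is transparently respected.
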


Combining Theorem~\ref{weyl} and Corollary~\ref{khinlev}, we obtain
\begin{cor}\label{numtheorycor}
For almost every   $t$, and for any $\epsilon>0$, we have
$$
\sup_x \big|\sum_{n=1}^N e^{-itn^2+inx}\big| \lesssim N^{\frac12+\epsilon}
$$
for all  $N$.
\end{cor}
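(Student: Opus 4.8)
The plan is to feed, for each large $N$, a well-chosen denominator $q$ comparable to $N$ into the Weyl-type bound of Theorem~\ref{weyl}, and to read off from Corollary~\ref{khinlev} that such a $q$ is always available. The guiding heuristic is that the three terms $N/\sqrt{q}$, $\sqrt{N\log q}$, $\sqrt{q\log q}$ on the right-hand side of Theorem~\ref{weyl} are simultaneously of size $N^{1/2}$, up to logarithmic and $N^{\epsilon}$ factors, precisely when $q\approx N$: the first term wants $q$ large while the third wants $q$ small, and the balance point sits at $q\sim N$. So the whole corollary reduces to producing an admissible $q$ in a narrow window around $N$.

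Concretely, I would fix $\epsilon>0$ and a $t$ in the full-measure set furnished by Corollary~\ref{khinlev}. For all sufficiently large $N$ that corollary produces an integer $q\in[N,N^{1+\epsilon}]$ for which the Dirichlet condition \eqref{dirichlet} holds, so Theorem~\ref{weyl} applies with this particular $q$. I would then estimate each term separately: since $q\geq N$ we get $N/\sqrt{q}\leq N^{1/2}$; since $q\leq N^{1+\epsilon}$ we have $\log q\lesssim \log N$, whence $\sqrt{N\log q}\lesssim N^{1/2}(\log N)^{1/2}$ and $\sqrt{q\log q}\lesssim N^{(1+\epsilon)/2}(\log N)^{1/2}$. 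Absorbing the logarithm and the extra factor $N^{\epsilon/2}$ into a slightly larger power, the right-hand side is $\lesssim N^{\frac12+\epsilon'}$ with $\epsilon'$ a fixed multiple of $\epsilon$; choosing the parameter fed into Corollary~\ref{khinlev} to be a suitable fraction of the target exponent at the outset then yields exactly $N^{\frac12+\epsilon}$.

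It remains to deal with the fact that Corollary~\ref{khinlev} only supplies $q$ once $N$ exceeds some threshold $N_0=N_0(t,\epsilon)$. For the finitely many $N\leq N_0$ I would simply invoke the trivial bound $\sup_x\big|\sum_{n=1}^N e^{-itn^2+inx}\big|\leq N$, which is $\leq N^{\frac12+\epsilon}$ after enlarging the implicit constant to absorb these finitely many exceptional scales; this secures the estimate for \emph{all} $N$, as stated.

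I do not expect any genuine analytic obstacle here beyond the calibration in the second paragraph, which is the only real content. The point to watch is that the admissible window $[N,N^{1+\epsilon}]$ must be narrow enough that the term $\sqrt{q\log q}$ does not overwhelm the target; this is exactly why the Khinchin--L\'evy input is indispensable. Its role is to bound the \emph{multiplicative} gaps between consecutive good denominators, $q_{k+1}\leq q_k^{1+\epsilon}$, guaranteeing that a Dirichlet denominator always lands within a factor $N^{\epsilon}$ of any prescribed $N$. A weaker spacing result would allow the nearest available $q$ above $N$ to be as large as $N^{1+\delta}$ with $\delta$ not controllable by $\epsilon$, making $\sqrt{q\log q}\sim N^{(1+\delta)/2}$ and breaking the bound.
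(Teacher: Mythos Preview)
Your proposal is correct and follows exactly the approach the paper indicates: the paper simply writes ``Combining Theorem~\ref{weyl} and Corollary~\ref{khinlev}, we obtain'' and states the corollary without further detail, and you have correctly filled in the balancing argument (choosing $q\in[N,N^{1+\epsilon}]$ so that all three terms in Theorem~\ref{weyl} are $O(N^{1/2+\epsilon})$) together with the trivial bound for small $N$. There is nothing to add.
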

Using Corollary~\ref{numtheorycor} in \eqref{hntn},  we see that for almost every $t$  the sequence $H_{N,t}$ converges uniformly to a continuous function
$$
H_{t}(x)=\sum_{n\neq 0} \frac{e^{-itn^2+inx}}{n}.
$$
It also implies that for any $\epsilon>0$, and for any $j=1,2,...,$
$$
\Big\|\sum_{2^{j-1}\leq |n| <2^j}  \frac{e^{-itn^2+inx}}{n} \Big\|_{L^\infty_x}\les 2^{-j(\frac12-\epsilon)}.
$$

Therefore for almost every $t$
$$
H_t\in \bigcap_{\epsilon>0} B_{\infty,\infty}^{\frac12-\epsilon}(\T).
$$
Recall that   the Besov space $B^s_{p,\infty}$ is defined via the norm:
$$
\|f\|_{B^s_{p,\infty}}:=\sup_{j\geq 0}2^{sj} \|P_j f\|_{L^p},
$$
where $P_j$ is a Littlewood-Paley projection on to the frequencies $\approx 2^j$.

Now, given function $g$ of bounded variation, we write
$$
e^{it\partial_{xx}}g=\widehat{g}(0)+\sum_{n\neq 0} e^{-itn^2+inx} \widehat{g}(n).
$$
Note that
$$
\widehat{g}(n)=\frac1{2\pi}\int_\T e^{-iny} g(y)dy =\frac1{2\pi i n} \int_\T e^{-iny} dg(y),
$$
where $dg$ is the Lebesgue-Stieltjes measure associated with $g$. Therefore
$$
e^{it\partial_{xx}}g=\widehat{g}(0)+\lim_{N\to\infty} H_{N,t} * dg = \widehat{g}(0)+H_{t}* dg
$$
by the uniform convergence of the sequence $H_{N,t}$.  In particular, for almost every $t$,
\be\label{Binf}
e^{it\partial_{xx}} g \in \Big[\bigcap_{\epsilon>0} B_{\infty,\infty}^{\frac12-\epsilon}(\T)\Big] \bigcap C^0(\T).
\ee
Now in addition assume that $g\not\in H^r(\T)$ for any $r>r_0\geq \frac12$. This implies using Lemma~\ref{real_imag} that
$$
\Im e^{it\partial_{xx}} g ,\,\, \Re e^{it\partial_{xx}} g \not \in \bigcup_{\epsilon>0} B^{2r_0-\frac12+\epsilon}_{1,\infty}(\T),
$$
since
\be\label{interpol}
H^r(\T) \supset B_{1,\infty}^{r_1}(\T) \cap B_{\infty,\infty}^{r_2}(\T),
\ee
for $r_1+r_2>2r$.

The lower bound for the upper Minkowski dimension in   Theorem~\ref{osk_rod}  follows from the following theorem of Deliu and Jawerth \cite{DJ}.

\begin{theorem}\cite{DJ} The graph of a continuous function $f:\T\to\R$  has upper Minkowski dimension $D \geq 2-s$ provided that $f\not\in\bigcup_{\epsilon>0} B^{s+\epsilon}_{1,\infty}$.
\end{theorem}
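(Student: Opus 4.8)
The plan is to bound the box dimension of the graph from below through the oscillation of $f$, and then to convert the hypothesis $f\notin\bigcup_{\epsilon>0}B^{s+\epsilon}_{1,\infty}$ into a lower bound on that oscillation via the $L^1$ modulus of continuity. First I would record the elementary box-counting estimate: partition $\T$ into $\approx\delta^{-1}$ consecutive intervals $I_j$ of length $\delta$ and set $V(\delta)=\sum_j \text{osc}_{I_j}(f)$, where $\text{osc}_I(f)=\sup_I f-\inf_I f$. Since the portion of the graph lying above $I_j$ has vertical extent $\text{osc}_{I_j}(f)$, it meets at least $\text{osc}_{I_j}(f)/\delta$ squares of a $\delta$-mesh, and distinct columns meet disjoint squares; hence $\mathcal N(\text{graph},\delta)\gtrsim V(\delta)/\delta$ (mesh-square counts being comparable to the covering number by $\delta$-balls). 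Consequently $\overline{\dim}(\text{graph})\geq 1+\limsup_{\delta\to0}\frac{\log V(\delta)}{\log(1/\delta)}$, so it suffices to show $V(\delta)\gtrsim \delta^{s-1+\epsilon}$ along some sequence $\delta\to0$, for each fixed $\epsilon>0$.

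Next I would relate $V(\delta)$ to the first-difference modulus $\omega(h):=\|f(\cdot+h)-f\|_{L^1(\T)}$. Taking $\delta=h>0$ and the same partition, for $x\in I_j$ both $x$ and $x+h$ lie in $I_j\cup I_{j+1}$, so $|f(x+h)-f(x)|\leq \text{osc}_{I_j}(f)+\text{osc}_{I_{j+1}}(f)$; integrating over $I_j$ and summing in $j$ gives $\omega(h)\leq 2h\,V(h)$, i.e. $V(h)\gtrsim \omega(h)/h$. I note that only $0<s<1$ is at issue: for $s\geq1$ the bound $D\geq 2-s$ is automatic, since the graph of a continuous function over an interval always has box dimension at least $1$; and for $0<s<1$ the first-order difference characterization of the Besov space is available.

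Finally I would invoke the standard equivalence, valid for $0<\sigma<1$, that $f\in B^{\sigma}_{1,\infty}(\T)$ iff $\sup_{0<|h|\le1}|h|^{-\sigma}\omega(h)<\infty$. Fixing $\epsilon>0$ small enough that $s+\epsilon<1$, the hypothesis $f\notin B^{s+\epsilon}_{1,\infty}$ forces $\limsup_{h\to0}|h|^{-(s+\epsilon)}\omega(h)=\infty$ (the supremum can blow up only as $h\to0$, since $\omega(h)\le 2\|f\|_{L^1}$). Thus there is a sequence $h_k\to0$ with $\omega(h_k)\geq |h_k|^{s+\epsilon}$. Combining with the two previous steps at scale $\delta_k=|h_k|$ yields $V(\delta_k)\gtrsim \delta_k^{s+\epsilon-1}$ and then $\mathcal N(\text{graph},\delta_k)\gtrsim \delta_k^{s+\epsilon-2}$, so that $\frac{\log \mathcal N(\text{graph},\delta_k)}{\log(1/\delta_k)}\geq 2-s-\epsilon-o(1)$. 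Therefore $\overline{\dim}(\text{graph})\geq 2-s-\epsilon$, and letting $\epsilon\to0$ gives $D\geq 2-s$.

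The genuinely substantive ingredient is the third step: the equivalence between $B^{s}_{1,\infty}$-regularity and the $L^1$ first-difference modulus of continuity, which is exactly the bridge that makes the negative Besov hypothesis usable. The box-counting inequality and the passage $\omega(h)\lesssim h\,V(h)$ are elementary; the only points requiring care are the restriction $s<1$ needed for the first-difference characterization (harmless here, as $s\geq1$ is trivial) and the observation that $\sup_h$ diverges only as $h\to0$, which is what produces the vanishing sequence $h_k$ feeding the $\limsup$ defining the Minkowski dimension.
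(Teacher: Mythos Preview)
The paper does not prove this theorem at all; it simply quotes it from Deliu--Jawerth \cite{DJ} and uses it as a black box in the proof of Theorem~\ref{osk_rod}. So there is no proof in the paper to compare against.

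Your argument is correct and is in fact essentially the classical one. The three ingredients you isolate --- the column-oscillation lower bound $\mathcal N(\mathrm{graph},\delta)\gtrsim V(\delta)/\delta$, the elementary estimate $\omega(h)\le 2h\,V(h)$ linking the $L^1$ first-difference modulus to the oscillation sum, and the first-difference characterization of $B^{\sigma}_{1,\infty}$ for $0<\sigma<1$ --- are exactly the mechanism behind the Deliu--Jawerth result. Your handling of the edge cases (the reduction to $0<s<1$, and the observation that the supremum in the modulus characterization can only blow up as $h\to 0$) is also fine. Nothing is missing.
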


We should now note that $C^\alpha(\T)$ coincides with $B^\alpha_{\infty,\infty}(\T)$, see, e.g., \cite{tri}, and that  if $f:\T\to\R$ is in $C^\alpha$, then
the graph of $f$ has upper Minkowski dimension $D\leq 2-\alpha$. Therefore, the graphs of $\Re(u)$ and $\Im(u)$ have dimension at most $\frac32$ for almost all $t$.

\end{proof}

The smoothing result in \cite{erdtzi1} and  Theorem~\ref{osk_rod} above  imply as in the proof of Theorem~A  the following:

\begin{theorem} 
 Consider the nonlinear Schr\"odinger equation on the torus
\begin{align*}
&iu_t+u_{xx}+|u|^2u=0,\,\,\,\,\,\, t\in \R,\,\,\,x\in\T=\R/2\pi\Z,\\
&u(x,0)=g(x),
\end{align*}
where  $g:\T \to \C $ is of bounded variation and $g\not \in \bigcup_{\epsilon>0}H^{r_0+\epsilon}$ for some $r_0\in[\frac12,\frac34) $, then for almost all $t$ both  the real part and the imaginary part of the graph of
$u $ have upper Minkowski dimension $D\in [\frac52-2r_0,\frac32]$.  
\end{theorem}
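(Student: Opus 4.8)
The plan is to combine the Duhamel smoothing estimate of \cite{erdtzi1} with Theorem~\ref{osk_rod}, exactly as Theorem~A is obtained; the only new point is that Theorem~\ref{osk_rod} now controls \emph{both} the real and imaginary parts of the linear evolution. First I would record that $g\in BV$ forces $\widehat g(n)\les |n|^{-1}$, so that $r_0=\sup\{s:g\in H^s\}\geq\frac12$, and pass to the gauge-transformed variable $v=e^{i\beta t}u$, where $\beta$ is the real constant fixed by the conserved $L^2$-norm of $u_0$. Then $v$ solves a renormalized cubic NLS whose nonlinearity has no resonant (non-smoothing) component, and the smoothing estimate gives, for a.e. $t$,
\be
v(\cdot,t)-e^{it\partial_{xx}}g\in H^{r_0+\frac12-}(\T),
\ee
i.e. the Duhamel remainder gains almost half a derivative over the linear part. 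Writing $R:=v-e^{it\partial_{xx}}g$ and undoing the gauge,
\be
u(\cdot,t)=e^{it\partial_{xx}}\big(e^{-i\beta t}g\big)+e^{-i\beta t}R,
\ee
where $\tilde g:=e^{-i\beta t}g$ is, for each fixed $t$, of bounded variation with the same critical exponent $r_0$, and $e^{-i\beta t}R\in H^{r_0+\frac12-}$.

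Next I would dispose of the two terms separately. For the upper bound, $e^{it\partial_{xx}}\tilde g\in C^{\frac12-}$ by \eqref{Binf} applied to $\tilde g$, while $r_0\geq\frac12$ gives $e^{-i\beta t}R\in H^{1-}\hookrightarrow C^{\frac12-}$; hence $\Re u,\Im u\in C^{\frac12-}$ for a.e. $t$, so their graphs have upper Minkowski dimension at most $\frac32$. For the lower bound I would apply the Deliu--Jawerth criterion at the threshold exponent $2r_0-\frac12$: by Lemma~\ref{real_imag} (applied to $\tilde g$) together with \eqref{interpol} and the $B^{\frac12-}_{\infty,\infty}$ membership, both $\Re(e^{it\partial_{xx}}\tilde g)$ and $\Im(e^{it\partial_{xx}}\tilde g)$ fail to lie in $\bigcup_{\epsilon>0}B^{2r_0-\frac12+\epsilon}_{1,\infty}$ for a.e. $t$, whereas $e^{-i\beta t}R\in H^{r_0+\frac12-}\hookrightarrow B^{r_0+\frac12-}_{1,\infty}$. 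Since $r_0<\frac34<1$ we have $r_0+\frac12>2r_0-\frac12$, so for small $\epsilon$ the remainder \emph{does} lie in $B^{2r_0-\frac12+\epsilon}_{1,\infty}$; thus $\Re u$ and $\Im u$, being a non-member plus a member of a vector space, stay outside $\bigcup_{\epsilon>0}B^{2r_0-\frac12+\epsilon}_{1,\infty}$, and Deliu--Jawerth yields dimension at least $\frac52-2r_0$.

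The one step requiring care — and the main obstacle — is the $t$-dependence of the gauge phase when invoking Lemma~\ref{real_imag} for $\tilde g=e^{-i\beta t}g$. Inserting the phase multiplies the oscillatory cross term in that lemma by $e^{-2i\beta t}$, so the relevant frequencies become $2(n^2+\beta)$ rather than $2n^2$; these are generally non-integers, so exact orthogonality over a period is lost. I would resolve this by noting that the divergence argument uses only the separation $|n^2-m^2|\geq 1$ of the phases: the diagonal (divergent) sum is phase-independent, and the cross term still converges in $L^2_t$ on a fixed interval by an Ingham-type almost-orthogonality estimate, hence converges a.e. along a subsequence. Since bounded variation and the exponent $r_0$ are invariant under multiplication by a constant unimodular factor, Lemma~\ref{real_imag} applies to $\tilde g$ verbatim up to this harmless frequency shift, and the conclusion follows simultaneously for the real and the imaginary parts.
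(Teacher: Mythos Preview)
Your approach is correct and is exactly what the paper intends: combine the smoothing estimate of \cite{erdtzi1} with Theorem~\ref{osk_rod} as in the proof of Theorem~A. One simplification: the gauge-phase obstacle you flag dissolves without any Ingham-type estimate, because the factor $e^{-2i\beta t}$ is independent of the summation index $k$ and hence pulls out of the cross term in the proof of Lemma~\ref{real_imag} as a unimodular constant; the remaining sum $\sum k^{2r}e^{-2itk^2}\widehat g(k)\widehat g(-k)$ converges in $L^2_t(\T)$ by Plancherel exactly as before, so the conclusion of Lemma~\ref{real_imag} actually holds for $e^{i\alpha}g$ with an a.e.\ exceptional set in $t$ that is uniform in the constant phase $\alpha$, and one simply takes $\alpha=-\beta t$.
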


We now turn our attention to the problem of fractal dimension of the density function $|e^{it\partial_{xx}} g|^2$. This problem was left open in \cite{rod}.
\begin{theorem}\label{thm:density} 
Let $g$ be a nonconstant complex valued step function on $\T$ with jumps only at rational multiples of $\pi$.
Then for almost every $t$ the graphs of $|e^{it\partial_{xx}} g|^2$ and $|e^{it\partial_{xx}} g|$ have upper Minkowski dimension $\frac32$.
\end{theorem}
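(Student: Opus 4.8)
The plan is to prove the upper bound $\overline{\text{dim}}\le\frac32$ and the lower bound $\overline{\text{dim}}\ge\frac32$ separately, for both $|u|^2$ and $|u|$, where $u:=e^{it\partial_{xx}}g$. For the upper bound, note that by Theorem~\ref{osk_rod} (concretely by \eqref{Binf}) the function $u$ is, for almost every $t$, continuous and contained in $C^{\frac12-\epsilon}(\T)=B^{\frac12-\epsilon}_{\infty,\infty}(\T)$ for every $\epsilon>0$. Since $u$ is bounded, the elementary inequalities $\big||u(x)|^2-|u(y)|^2\big|\le(|u(x)|+|u(y)|)\,|u(x)-u(y)|$ and $\big||u(x)|-|u(y)|\big|\le|u(x)-u(y)|$ show that $|u|^2,|u|\in C^{\frac12-\epsilon}$ for a.e. $t$. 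The stated fact that a $C^\alpha$ function has graph of upper Minkowski dimension at most $2-\alpha$ then yields the bound $\frac32$ for both graphs.

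For the lower bound, the key claim is that for a.e. $t$ we have $|u|^2\notin H^r(\T)$ for every $r>\frac12$. Granting this, exactly as in the proof of Theorem~\ref{osk_rod} I combine $|u|^2\in B^{\frac12-\epsilon}_{\infty,\infty}$ with the embedding \eqref{interpol}: if $|u|^2$ were in $B^{\frac12+\delta}_{1,\infty}$, then \eqref{interpol} would force $|u|^2\in H^r$ for some $r>\frac12$, a contradiction; hence $|u|^2\notin\bigcup_{\epsilon>0}B^{\frac12+\epsilon}_{1,\infty}$ and the Deliu--Jawerth theorem gives graph dimension $\ge 2-\frac12=\frac32$. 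The statement for $|u|$ comes for free: since $H^r(\T)$ is an algebra for $r>\frac12$, membership $|u|\in H^r$ would give $|u|^2=|u|\cdot|u|\in H^r$, contradicting the claim; so $|u|\notin H^r$ as well, and the same Besov/Deliu--Jawerth argument applies.

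Everything thus reduces to showing $\sum_k\la k\ra^{2r}\big|\widehat{|u|^2}(k)\big|^2=\infty$ for a.e. $t$ when $r>\frac12$; as $H^{r'}\subset H^{r}$ for $r'>r$, it suffices to treat $r\in(\frac12,\frac34)$. A direct computation gives $\widehat{|u|^2}(k)=\sum_m e^{-it((m+k)^2-m^2)}\,\widehat g(m+k)\,\overline{\widehat g(m)}$. Because $g$ is a step function with jumps only at rational multiples of $\pi$, I may write $\widehat g(n)=a(n)/n$ for $n\ne0$, where $a(n)=\frac1{2\pi i}\sum_\ell J_\ell e^{-inx_\ell}$ is bounded and \emph{periodic} in $n$ with some period $Q$ --- this is precisely where the rationality of the jump points enters. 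Expanding $\big|\widehat{|u|^2}(k)\big|^2$ and separating the diagonal $m=m'$ from the off-diagonal, I write $\sum_{k\le K}\la k\ra^{2r}\big|\widehat{|u|^2}(k)\big|^2=\mathrm{Diag}(K)+\mathrm{Off}(K,t)$ with $\mathrm{Diag}(K)=\sum_{k\le K}\la k\ra^{2r}\sum_m|\widehat g(m+k)|^2|\widehat g(m)|^2$ being $t$-independent and $\mathrm{Off}$ carrying all the oscillation in $t$. A short lower bound, using that $g$ nonconstant forces $a\not\equiv0$ so that for a positive density of $k$ some $m$ with $|m|=O(1)$ contributes $\gtrsim k^{-2}$, shows $\mathrm{Diag}(K)\gtrsim\sum_{k\le K}k^{2r-2}\to\infty$.

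The main obstacle is to control $\mathrm{Off}(K,t)$, which has mean zero in $t$. The naive route fails: if $G_k$ denotes the $k$-th fluctuation one finds $\|\la k\ra^{2r}G_k\|_{L^2_t}\sim k^{2r-2}$, so a triangle inequality in $k$ only yields $\sum_k k^{2r-2}$, the same order as $\mathrm{Diag}$, and is useless. The resolution is to exploit the arithmetic of the time-frequencies: grouping the off-diagonal terms by the value $\ell=k(m-m')$ appearing in $e^{-2it\ell}$ gives $\|\mathrm{Off}(\infty,\cdot)\|_{L^2_t}^2=\sum_{\ell\ne0}\big|\sum_{d\mid\ell}(\ell/d)^{2r}\sum_m\widehat g(m+\tfrac\ell d)\overline{\widehat g(m)}\,\overline{\widehat g(m-d+\tfrac\ell d)}\widehat g(m-d)\big|^2$, where the periodicity of $a$ confines the resonant contributions to arithmetic progressions and lets one bound the inner sum by $\la\ell\ra^{2r-2+\epsilon}$ (the dominant part coming from $|d|=O(1)$, i.e. $|m|,|m'|=O(1)$); hence the whole quantity is $\les\sum_\ell\la\ell\ra^{4r-4+\epsilon}<\infty$ for $r<\frac34$. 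Thus $\mathrm{Off}(K,t)$ converges in $L^2_t$; passing to a subsequence $K_j$ along which it converges a.e. to a finite limit, while $\mathrm{Diag}(K_j)\to\infty$, gives $\sum_k\la k\ra^{2r}\big|\widehat{|u|^2}(k)\big|^2=\infty$ for a.e. $t$ (monotonicity of the nonnegative partial sums removes the subsequence). I expect this divisor/frequency-grouping estimate --- in particular verifying that the periodicity of $a$ suppresses the off-diagonal resonances --- to be the technical heart of the proof, and it is exactly the step that breaks down for general bounded variation data, which explains the restriction to rational jump points.
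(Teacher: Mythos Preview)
Your outline is essentially correct and the divisor--grouping estimate for the off-diagonal can indeed be carried out: writing $b_{k,d}=\sum_m\hat g(m+k)\overline{\hat g(m)}\,\overline{\hat g(m+k-d)}\hat g(m-d)$, one checks (using only $|\hat g(n)|\lesssim 1/|n|$) that $|b_{k,d}|\lesssim \log(|k|+|d|)\,\max(|k|,|d|)^{-2}\min(|k|,|d|)^{-1}$ for $d\ne\pm k$ and $|b_{k,\pm k}|\lesssim k^{-2}$, from which $|c_\ell|\lesssim \ell^{2r-2+\epsilon}$ and hence $\sum_\ell|c_\ell|^2<\infty$ for $r<\frac34$. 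The $L^2_t$ convergence and a.e.~subsequence extraction then proceed exactly as in Lemma~\ref{real_imag}.

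Your route is, however, genuinely different from the paper's. Rather than a diagonal/off-diagonal splitting of the $H^r$ norm, the paper proves directly that $\big|\widehat{|e^{it\partial_{xx}}g|^2}(k)\big|\gtrsim 1/k$ on a positive-density set of $k$, for \emph{every} irrational $t/2\pi$. It does this by restricting to $k$ in the arithmetic progression where $\hat g(k)=0$, evaluating the convolution in closed form via the identity $\sum_{n\ne0}e^{in\alpha}/n=i(\pi-\alpha)$, and then invoking Weyl equidistribution of $(\lfloor-2kt\rfloor,\lfloor k^2t\rfloor)$ on $\T^2$. This buys a sharper pointwise statement: $|u|^2\notin H^{1/2}$ for every irrational $t/2\pi$, not merely almost every $t$.

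Conversely, your argument is more robust. The off-diagonal bound uses only $|a(n)|\lesssim 1$ (not periodicity), and the diagonal lower bound uses only that $|a(n)|\gtrsim 1$ on a positive-density set---which, by almost periodicity of the trigonometric sum $a(n)=\frac{1}{2\pi i}\sum_\ell J_\ell e^{-inx_\ell}$, holds for \emph{any} nonconstant step function, rational jumps or not. So your method actually removes the rationality hypothesis. One small correction to your write-up: the periodicity of $a$ is needed only for the \emph{diagonal} lower bound, not to ``suppress resonances'' in the off-diagonal; your remark that the off-diagonal estimate is where rationality enters is misplaced.
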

The upper bound follows immediately from the proof of Theorem~\ref{osk_rod} since $C^\alpha(\T)$ is an algebra. The lower bound for $|e^{it\partial_{xx}} g|$ follows from the lower bound for $|e^{it\partial_{xx}} g|^2$ since $|e^{it\partial_{xx}} g|$ is a continuous and hence bounded function for almost every $t$. The lower bound for $|e^{it\partial_{xx}} g|^2$ follows from the same proof above provided that we have
$$
|e^{it\partial_{xx}} g|^2 \in  \bigcap_{\epsilon>0} B_{\infty,\infty}^{\frac12-\epsilon}(\T) 
$$
and 
$$|e^{it\partial_{xx}} g|^2\not\in H^{\frac12}.$$
The former follows from \eqref{Binf} since $B^\alpha_{\infty,\infty}=C^\alpha$ is an algebra. 
For the latter we have:
\begin{proposition}
Let $g$ be a nonconstant complex valued step function on $\T$ with jumps only at rational multiples of $\pi$.
 Then for every irrational value of $\frac{t}{2\pi}$, we have $|e^{it\partial_{xx}} g|^2\not\in H^{\frac12}$.
\end{proposition}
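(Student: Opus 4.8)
The plan is to compute the Fourier coefficients of $|u|^2$, where $u=e^{it\partial_{xx}}g$, and to show that along the arithmetic progression $k\equiv 0$ modulo a suitable period these coefficients collapse to an explicit product whose size can be bounded below on a positive-density set of $k$. Writing the jumps of $g$ as point masses $dg=\sum_j a_j\delta_{x_j}$ with each $x_j$ a rational multiple of $\pi$, we have $\hat g(n)=\frac{b(n)}{2\pi i n}$ for $n\neq 0$, where $b(n)=\sum_j a_j e^{-inx_j}$. The hypothesis that the $x_j$ are rational multiples of $\pi$ makes $n\mapsto b(n)$ periodic with some integer period $Q$ (any $Q$ with $Qx_j\in 2\pi\Z$, e.g. $Q=2\,\mathrm{lcm}(q_j)$), and the periodicity of $g$ forces $b(0)=\sum_j a_j=0$. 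Since $u_n=e^{-itn^2}\hat g(n)$, a direct expansion gives
$$\widehat{|u|^2}(k)=e^{itk^2}\sum_m e^{-2itmk}\,\hat g(m)\,\overline{\hat g(m-k)}.$$

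I would then specialize to $k\equiv 0\pmod Q$. There $b(m-k)=b(m)$ and $b(\pm k)=b(0)=0$, so the two boundary terms $m=0,k$ (the only ones involving the mean $\hat g(0)$) drop out, and the remaining sum is absolutely convergent with summand $\frac{|b(m)|^2}{(2\pi)^2\,m(m-k)}e^{-2itmk}$. Applying $\frac{1}{m(m-k)}=\frac1k\bigl(\frac1{m-k}-\frac1m\bigr)$ and the index shift $m\mapsto m-k$ (legitimate because $|b(m)|^2$ is $Q$-periodic and the boundary errors in shifting the two conditionally convergent pieces vanish), the two sums reduce to $S(2tk)$ up to the phase $e^{-2itk^2}$, where
$$S(\phi)=\sum_{m\neq 0}\frac{e^{-i\phi m}\,|b(m)|^2}{m}.$$
This produces the exact closed form
$$\widehat{|u|^2}(k)=\frac{-2i\,\sin(tk^2)}{(2\pi)^2\,k}\,S(2tk),\qquad k\equiv 0\ (\mathrm{mod}\ Q).$$

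It remains to bound the right-hand side below by $c/|k|$ on a positive-density set of $k$. Decomposing the $Q$-periodic sequence $|b(m)|^2$ into discrete Fourier modes and using $\sum_{m\neq 0}\frac{e^{-i\psi m}}{m}=-i(\pi-\psi)$ on $(0,2\pi)$ shows that $S$ is a finite combination of shifted sawtooth functions; its jump at $\phi=0$ equals $-2\pi i\cdot\frac1Q\sum_{s}|b(s)|^2$, which is nonzero exactly because $g$ is nonconstant. Hence $S\not\equiv 0$ and $\{|S|^2\ge\delta\}$ has positive measure for small $\delta$. For every irrational $\frac{t}{2\pi}$ the two-dimensional Weyl criterion shows that $k\mapsto(2tk,\,tk^2)$ is equidistributed in $\T^2$ along $k\equiv 0\pmod Q$, since any nonzero integer combination is a polynomial in $k$ with an irrational leading or linear coefficient. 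Therefore a positive proportion of such $k$ satisfy simultaneously $\sin^2(tk^2)\ge\frac12$ and $|S(2tk)|^2\ge\delta$, whence
$$\sum_k\langle k\rangle\,|\widehat{|u|^2}(k)|^2\ \ge\ \frac{4}{(2\pi)^4}\sum_{k\equiv 0\,(Q)}\frac{\sin^2(tk^2)\,|S(2tk)|^2}{|k|}=\infty,$$
which is precisely $|u|^2\notin H^{1/2}$.

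The delicate point, and the reason the statement holds for every irrational time rather than only almost every time, is this last step: a priori the oscillating factor $\sin(tk^2)$ could vanish exactly where $|S(2tk)|$ is large. What rules this conspiracy out is the joint equidistribution of the two genuinely different frequencies $tk^2$ and $2tk$ in $\T^2$, which decouples them. The algebraic collapse yielding the clean product $\sin(tk^2)\,S(2tk)$ is the engine that makes this two-dimensional Weyl input exactly what is required, so I expect the main effort to lie in justifying the closed form (the vanishing of the boundary terms and the clean combination of the two conditionally convergent sums) together with the nonvanishing of $S$.
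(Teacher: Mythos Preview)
Your proof is correct and follows the same route as the paper: restrict $k$ to the arithmetic progression determined by the rational jump points, apply partial fractions and an index shift to collapse $\widehat{|u|^2}(k)$ to a constant times $\frac{\sin(tk^2)}{k}\,S(2tk)$, and then invoke two-dimensional Weyl equidistribution of $(2tk,tk^2)$ along that progression to produce a positive-density set of $k$ on which both factors are bounded below. The only cosmetic differences are that the paper keeps an $O(1/k^2)$ error rather than observing (as you do) that $b(\pm k)=b(0)=0$ makes the formula exact, and that it verifies the nonvanishing of the sawtooth factor by evaluating it explicitly for $\lfloor -2kt\rfloor\in(0,\epsilon)$ and applying a Cauchy--Schwarz argument to the resulting jump vector, in place of your jump-of-$S$ argument.
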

\begin{proof} 
We can write (after a translation and adding a constant to $g$) $g=\sum_{\ell=1}^{L} c_\ell \chi_{[a_\ell,b_\ell)}$, where $[a_\ell,b_\ell)$ are disjoint and nonempty intervals, and $0=a_1<a_2<\ldots<a_L<b_L<2\pi=:a_{L+1}$. Here $c_\ell$'s are nonzero complex numbers and they are distinct if the corresponding intervals have a common endpoint.

It suffices to prove that for a positive density subset $S$ of $\N$ we have,
\be \label{setS}
\forall k\in S,\,\,\,\,\,\,\,\big|\widehat{|e^{it\partial_{xx}} g|^2}(k)\big|\gtrsim \frac1k.
\ee
First note that for $n\neq 0$
$$
\widehat{e^{it\partial_{xx}} g}(n)=\frac{i}{2\pi} \sum_{\ell=1}^{L} c_\ell  e^{-itn^2} \frac{e^{-inb_\ell}-e^{-ina_\ell}}{n}.
$$
Let $K$ be a natural number such that $Ka_\ell=Kb_\ell =0$ (mod $2\pi$) for each $\ell$. For $k$ divisible by $K$,
we have (using $\widehat{g}(k)=0$)
\begin{align*}
\widehat{|e^{it\partial_{xx}} g|^2}(k)&=\frac{1}{4\pi^2} \sum_{\ell,m=1}^{L} c_\ell \overline{c_m} \sum_{n\neq 0,k} e^{-itn^2}e^{it(n-k)^2} \frac{(e^{-inb_\ell}-e^{-ina_\ell}) (e^{inb_m}-e^{in a_m})}{n (n-k)}\\
&= \frac{e^{itk^2}}{4\pi^2 k} \sum_{\ell,m=1}^{L} c_\ell \overline{c_m} \sum_{n\neq 0,k} \frac{e^{-2itnk}}{n-k} (e^{-inb_\ell}-e^{-ina_\ell}) (e^{inb_m}-e^{in a_m})   \\
&-\frac{e^{itk^2}}{4\pi^2 k} \sum_{\ell,m=1}^{L} c_\ell \overline{c_m} \sum_{n\neq 0,k} \frac{e^{-2itnk}}{n} (e^{-inb_\ell}-e^{-ina_\ell}) (e^{inb_m}-e^{in a_m}).
\end{align*}
Changing the variable $n-k\to n$ in the first sum, we obtain
$$
\widehat{|e^{it\partial_{xx}} g|^2}(k)=O(1/k^2)-\frac{i\sin(k^2t)}{2\pi^2 k}\sum_{\ell,m=1}^{L} c_\ell \overline{c_m} \sum_{n\neq 0} \frac{e^{-2itnk}}{n} (e^{-inb_\ell}-e^{-ina_\ell}) (e^{inb_m}-e^{in a_m}).
$$
Using the formula (for $0<\alpha<2\pi$)
$$
\sum_{n\neq 0}\frac{e^{in\alpha}}{n}= \log(1-e^{-i\alpha})-\log(1-e^{i\alpha})=i(\pi-\alpha),
$$
we have
\begin{multline*}
\widehat{|e^{it\partial_{xx}} g|^2}(k)=O(1/k^2) \\ -\frac{\sin(k^2t)}{2\pi^2 k}\sum_{\ell,m=1}^{L} c_\ell \overline{c_m}
\big( \lfloor -2kt+b_m-b_\ell \rfloor  - \lfloor   -2kt+b_m-a_\ell\rfloor  - \lfloor  -2kt+a_m -b_\ell\rfloor +  \lfloor -2kt+a_m-a_\ell \rfloor  \big),
\end{multline*}
where $\lfloor x \rfloor = x$ (mod $2\pi) \in [0,2\pi)$. Note that for  $0\leq a<b\leq 2\pi$, we have
$$\lfloor c-b\rfloor - \lfloor c-a\rfloor = a-b+2\pi \chi_{[a,b)}(\lfloor c\rfloor).$$
Using this we have
\begin{multline*}
\widehat{|e^{it\partial_{xx}} g|^2}(k)=O(1/k^2) \\ -\frac{\sin(k^2t)}{ \pi  k}\sum_{\ell,m=1}^{L} c_\ell \overline{c_m}
\big( \chi_{[a_\ell,b_\ell)}(\lfloor -2kt+b_m \rfloor)  - \chi_{[a_\ell,b_\ell)}(\lfloor -2kt+a_m \rfloor)   \big).
\end{multline*}
From now on we restrict ourself to $k$ so that $\lfloor -2kt \rfloor \in(0,\epsilon)$ for some fixed
$$0<\epsilon<\min\big(\min_{\ell\in\{1,\ldots,L\}}(b_\ell-a_\ell),\min_{\ell\in\{1,\ldots,L\}:a_{\ell+1}\neq b_\ell}(a_{\ell+1}-b_\ell)\big),$$
where $a_{L+1}=2\pi$. We note that for such $k$, $\chi_{[a_\ell,b_\ell)}(\lfloor -2kt+a_m \rfloor)  =1$  for $m=\ell$ and it is zero otherwise.
Moreover, $\chi_{[a_\ell,b_\ell)}(\lfloor -2kt+b_m \rfloor)=0$ for each $m\neq \ell-1$ (mod $L$), and if $\chi_{[a_\ell,b_\ell)}(\lfloor -2kt+b_{\ell-1}) \rfloor)=1$, then $c_{\ell-1}\neq c_\ell$.

 Therefore,
\begin{multline*}
\widehat{|e^{it\partial_{xx}} g|^2}(k)=O(1/k^2) +\frac{\sin(k^2t)}{ \pi  k}\Big[\sum_{\ell =1}^{L} |c_\ell|^2 - \sum_{\ell =1}^{L} c_\ell \overline{c_{\ell-1}}  \chi_{[a_\ell,b_\ell)}(\lfloor -2kt+b_{\ell-1} \rfloor) \Big]\\
=O(1/k^2) +\frac{\sin(k^2t)}{ \pi  k}\Big[\sum_{\ell =1}^{L} |c_\ell|^2 - \sum_{\ell =1}^{L} c_\ell \overline{c_{\ell-1}} \delta_{a_\ell,b_{\ell-1}}   \Big].
\end{multline*}
Note that, since $c_\ell$'s are nonzero, the absolute value of the quantity in bracket is nonzero if $\delta_{a_\ell,b_{\ell-1}} =0$ 
for some $\ell$. In the case $\delta_{a_\ell,b_{\ell-1}} =1$ for each $\ell$, we write it as
$$
\sum_{\ell =1}^{L} |c_\ell|^2 - \sum_{\ell =1}^{L} c_\ell \overline{c_{\ell-1}} =C \cdot C- C\cdot \tilde C, 
$$
where $C=(c_1,\ldots, c_L)$ and $\tilde C= (c_L,c_1,\ldots,c_{L-1})$.
Since, in this case,  adjacent $c_{\ell}$'s are distinct, we have $\|\tilde C\|=\|C\|$ and $\tilde C\neq C$. Therefore, the absolute value of the quantity in bracket is nonzero.

Thus, \eqref{setS} holds for
$$
S=\big\{k\in\N: K|k , \lfloor -2kt \rfloor \in(0,\epsilon), \lfloor k^2t \rfloor \in [\pi/4,3\pi/4]\big\}.
$$
This set has positive density for any irrational $\frac{t}{2\pi}$ since
$\big\{\big( \lfloor-2Kjt \rfloor, \lfloor K^2j^2t \rfloor\big):j\in\N\big\}$ is uniformly distributed on $\T^2$ by the classical Weyl's theorem (see e.g. Theorem 6.4 on page 49 in \cite{kn}).
\end{proof}

We now consider the case of higher order dispersion. 
\begin{theorem}\label{thm:higherdispersion}
Fix an integer $k\geq 3$.   Let  $g:\T \to \C $ be of bounded variation. Then $e^{it (-i \partial_x)^k}g$ is a continuous function of $x$ for almost every $t$. Moreover if in addition  $g\not \in \bigcup_{\epsilon>0}H^{\frac12+\epsilon}$, then for almost all $t$ both  the real part and the imaginary part of the graph of
$e^{it (-i \partial_x)^k}g $ have upper Minkowski dimension $D\in [1+2^{1-k},2-2^{1-k}]$.  
\end{theorem}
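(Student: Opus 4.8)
The plan is to follow the proof of Theorem~\ref{osk_rod} essentially verbatim, replacing the quadratic phase $n^2$ by $n^k$ and replacing the quadratic Weyl estimate of Theorem~\ref{weyl} by the classical Weyl inequality for degree-$k$ exponential sums. The central object is
$$H^{(k)}_{t}(x)=\sum_{n\neq 0}\frac{e^{itn^k+inx}}{n},$$
and, exactly as in \eqref{hntn}, summation by parts reduces everything to $L^\infty_x$ control of the partial sums $\sum_{n=1}^N e^{\pm itn^k+inx}$. Writing $\widehat g(n)=\frac{1}{2\pi i n}\int_\T e^{-iny}\,dg(y)$ as before, one then represents the evolution as $\widehat g(0)$ plus the convolution of $H^{(k)}_t$ with the Stieltjes measure $dg$, so that the regularity of $e^{it(-i\partial_x)^k}g$ is inherited from that of $H^{(k)}_t$.

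The only genuinely new input is number-theoretic. I would invoke the Weyl inequality: if $|\tfrac{t}{2\pi}-\tfrac aq|\le \tfrac1{q^2}$ with $(a,q)=1$, then
$$\sup_x\Big|\sum_{n=1}^N e^{itn^k+inx}\Big|\les N^{1+\epsilon}\big(q^{-1}+N^{-1}+qN^{-k}\big)^{2^{1-k}},$$
uniformly in $x$, since the linear term $inx$ does not affect the leading coefficient. Feeding in Corollary~\ref{khinlev}, which for a.e.\ $t$ produces for every large $N$ a denominator $q\in[N,N^{1+\epsilon}]$ satisfying \eqref{dirichlet}, the bracket is comparable to $N^{-1}$: indeed $q^{-1}+N^{-1}\approx N^{-1}$ while $qN^{-k}\le N^{1+\epsilon-k}\ll N^{-1}$ precisely because $k\ge 3$. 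This yields, for a.e.\ $t$ and every $\epsilon>0$, the analogue of Corollary~\ref{numtheorycor},
$$\sup_x\Big|\sum_{n=1}^N e^{itn^k+inx}\Big|\les N^{1-2^{1-k}+\epsilon};$$
note that for $k=2$ this exponent is $\tfrac12$, consistent with the quadratic case.

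From here the upper bound is immediate. The summation by parts \eqref{hntn} upgrades the partial-sum bound into the dyadic estimate $\|P_jH^{(k)}_t\|_{L^\infty}\les 2^{-j(2^{1-k}-\epsilon)}$, so for a.e.\ $t$ one has $H^{(k)}_t\in\bigcap_{\epsilon>0}B^{2^{1-k}-\epsilon}_{\infty,\infty}=\bigcap_{\epsilon>0}C^{2^{1-k}-\epsilon}(\T)$. Convolution with the finite measure $dg$ preserves this, giving continuity of the evolution and, since a $C^\alpha$ graph has upper Minkowski dimension at most $2-\alpha$, the bound $D\le 2-2^{1-k}$.

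The delicate part is the lower bound $D\ge 1+2^{1-k}$, which needs the analogue of Lemma~\ref{real_imag}: for a.e.\ $t$, neither $\Re$ nor $\Im$ of the evolution lies in $H^r$ for $r>\tfrac12$. As there, the real part has frequency-$m$ coefficient $\tfrac12[e^{itm^k}\widehat g(m)+e^{-it(-m)^k}\overline{\widehat g(-m)}]$, and the $H^r$ sum splits into a diagonal piece $\sum_m m^{2r}(|\widehat g(m)|^2+|\widehat g(-m)|^2)$, which diverges since $g\notin H^{1/2+}$, plus a cross term carrying the phase $e^{2itm^k}$. For even $k$ these phases are genuinely oscillatory and, because $m\mapsto m^k$ is injective, $\{e^{2itm^k}\}$ is orthonormal in $L^2(dt)$; Plancherel together with $|\widehat g(m)|\les m^{-1}$ shows the cross term converges in $L^2_t$ (hence a.e.\ along a subsequence), so the diagonal divergence wins. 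Combining membership in $B^{2^{1-k}-}_{\infty,\infty}$ with the interpolation embedding \eqref{interpol} then forces the real and imaginary parts out of $\bigcup_{\epsilon>0}B^{1-2^{1-k}+\epsilon}_{1,\infty}$, and the Deliu--Jawerth theorem delivers $D\ge 1+2^{1-k}$. The main obstacle is the odd-$k$ case: there $(-m)^k=-m^k$, the cross phase collapses to $e^{0}$, and the coefficient magnitude $|\widehat g(m)+\overline{\widehat g(-m)}|=2|\widehat{\Re g}(m)|$ becomes independent of $t$, so the a.e.-in-$t$ argument is unavailable. In that regime the evolution preserves reality and $\Re(e^{it(-i\partial_x)^k}g)=e^{it(-i\partial_x)^k}(\Re g)$, so the conclusion for the real (resp.\ imaginary) part holds exactly when $\Re g$ (resp.\ $\Im g$) itself fails to lie in $H^{1/2+}$; arranging the statement uniformly for both components when $k$ is odd is the point on which I would concentrate the argument.
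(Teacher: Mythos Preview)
Your proposal follows exactly the route the paper sketches: the paper's entire proof is the sentence that the argument is the $k=2$ case with Theorem~\ref{weyl} replaced by the degree-$k$ Weyl inequality (stated there as Theorem~\ref{weylk}), together with the conclusion $e^{it(-i\partial_x)^k}g\in\bigcap_{\epsilon>0}B^{2^{1-k}-\epsilon}_{\infty,\infty}(\T)\cap C^0(\T)$. Your summation-by-parts reduction, your use of Corollary~\ref{khinlev} to place $q\in[N,N^{1+\epsilon}]$, and your convolution representation $\widehat g(0)+H^{(k)}_t*dg$ are precisely the steps the paper leaves implicit. For the upper bound and the continuity statement your argument is complete and identical to the paper's intent.

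Your worry about the odd-$k$ lower bound is not a gap in your argument but a genuine issue the paper's one-line proof does not address. For odd $k$ the propagator commutes with complex conjugation, so $\|\Re\,e^{it(-i\partial_x)^k}g\|_{H^r}=\|\Re g\|_{H^r}$ for every $t$; taking $g=ih$ with $h$ real, of bounded variation, and $h\notin\bigcup_{\epsilon>0}H^{1/2+\epsilon}$ gives $\Re\,e^{it(-i\partial_x)^k}g\equiv 0$, whose graph has dimension $1<1+2^{1-k}$. Thus the analogue of Lemma~\ref{real_imag} cannot hold simultaneously for both components without an extra hypothesis (for instance that both $\Re g$ and $\Im g$ fail to lie in $\bigcup_{\epsilon>0}H^{1/2+\epsilon}$), and the statement as written needs that qualification when $k$ is odd. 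You have not missed an idea here; you have located a point the paper glosses over.
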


In particular, under the conditions of the theorem, the solution of the Airy equation, 
$$u_t+u_{xxx}=0,\,\,\,\,u(0,x)=g(x),\,\,x\in\T,$$
has upper Minkowski dimension $D\in [\frac54,\frac74]$ for almost every $t$.
 
The proof of this theorem is similar to the proof of the $k=2$ case above, by replacing Theorem~\ref{weyl} with
\begin{theorem}\label{weylk}\cite{montgomery} Fix an integer $k\geq 2$. Let $t$ satisfy \eqref{dirichlet} for some integers $a$ and $q$,
then
$$
\sup_x \big|\sum_{n=1}^N e^{itn^k+inx}\big| \lesssim N^{1+\epsilon}\big(\frac1q+\frac1N+\frac{q}{N^k} \big)^{2^{1-k} }.  
$$
\end{theorem}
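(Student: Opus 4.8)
The plan is to establish this by the classical Weyl differencing method: one lowers the degree of the phase from $k$ to $1$ by $k-1$ successive squarings, and then controls the resulting sum over the difference parameters using the Dirichlet hypothesis \eqref{dirichlet}. Write $\alpha=\frac{t}{2\pi}$ and $\beta=\frac{x}{2\pi}$, so that the quantity to be bounded is $S=\sum_{n=1}^{N}e^{2\pi i(\alpha n^{k}+\beta n)}$ and \eqref{dirichlet} reads $|\alpha-\frac aq|\le\frac1{q^2}$. We take $(a,q)=1$; this is the standard normalization for Weyl's inequality (the stated bound genuinely fails for badly non-reduced fractions) and it holds automatically in all our applications, where $a/q$ is a convergent of the continued fraction of $\alpha$ and hence in lowest terms. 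For $\theta\in\R$ let $\|\theta\|$ denote the distance from $\theta$ to the nearest integer.

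First I would carry out the differencing. Since $|S|^2=\sum_{|h|<N}\sum_{n}e^{2\pi i(\alpha(n^k-(n-h)^k)+\beta h)}$ and the inner phase is a polynomial in $n$ of degree $k-1$ with leading coefficient $k\alpha h$, one squaring trades degree $k$ for degree $k-1$. Iterating $k-1$ times, and applying Cauchy--Schwarz at each stage to pull the sum over the new difference parameter outside the modulus, one reaches an inequality of the shape
$$
|S|^{2^{k-1}}\le (2N)^{2^{k-1}-k}\sum_{|h_1|<N}\cdots\sum_{|h_{k-1}|<N}\Big|\sum_{n\in I}e^{2\pi i\,(k!\,\alpha\,h_1\cdots h_{k-1}\,n+\phi)}\Big|,
$$
where $I\subseteq\{1,\dots,N\}$ and $\phi=\phi(\vec h,\beta)$ gathers all terms constant in $n$; the accumulated exponent $2^{k-1}-k$ is easily verified by induction. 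The innermost sum is now \emph{linear} in $n$, so evaluating it as a geometric series bounds it by $\min\!\big(N,\tfrac1{2\|k!\,\alpha\,h_1\cdots h_{k-1}\|}\big)$. Crucially this bound does not see the phase shift $\phi$ (in particular it is independent of $\beta$, since the linear term $\beta n$ becomes constant in $n$ after the first difference), which is exactly what makes the final estimate uniform in $x$.

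Next I would collapse the $(k-1)$-fold sum over $\vec h$ to one dimension. Setting $m=k!\,h_1\cdots h_{k-1}$, a fixed integer $m$ with $|m|\les N^{k-1}$ has at most $\les N^{\epsilon}$ representations as such a product by the divisor bound, so (separating the diagonal $h_j=0$, which contributes a lower-order $\les N^{k-1}$)
$$
\sum_{|h_1|,\dots,|h_{k-1}|<N}\min\!\Big(N,\tfrac1{\|k!\,\alpha\,h_1\cdots h_{k-1}\|}\Big)\les N^{\epsilon}\sum_{0\le m\le k!\,N^{k-1}}\min\!\Big(N,\tfrac1{\|\alpha m\|}\Big).
$$
The heart of the matter is the standard one-dimensional estimate, valid whenever $|\alpha-\frac aq|\le\frac1{q^2}$ with $(a,q)=1$: for any $M\ge1$,
$$
\sum_{0\le m\le M}\min\!\Big(N,\tfrac1{\|\alpha m\|}\Big)\les\Big(\tfrac Mq+1\Big)\big(N+q\log q\big),
$$
which one proves by splitting $m$ into blocks of length $q$ and using that $m\mapsto am$ permutes the residues modulo $q$ on each block (this is the one place where coprimality is used). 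Taking $M\asymp N^{k-1}$ gives $\big(\tfrac{N^{k-1}}q+1\big)(N+q\log q)$, and since we may assume $q\le N^{k}$ (otherwise the target bound is $\gtrsim N$ and hence trivial), the factor $\log q$ is absorbed into $N^{\epsilon}$.

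Finally I would assemble the estimates. Chaining the three displays gives
$$
|S|^{2^{k-1}}\les N^{2^{k-1}-k+\epsilon}\Big(\tfrac{N^k}q+N^{k-1}+N+q\Big),
$$
and dividing by $N^{2^{k-1}}$ turns the right side into $N^{\epsilon}\big(\tfrac1q+\tfrac1N+\tfrac1{N^{k-1}}+\tfrac q{N^k}\big)$. For $k\ge2$ one has $\tfrac1{N^{k-1}}\le\tfrac1N$, so that term is absorbed, and taking $2^{k-1}$-th roots yields $|S|\les N^{1+\epsilon}\big(\tfrac1q+\tfrac1N+\tfrac q{N^k}\big)^{2^{1-k}}$ after renaming $\epsilon$, as claimed. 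I expect the main obstacle to be twofold: keeping the differencing induction honest (tracking both the exponent $2^{k-1}-k$ of $N$ and the genealogy of the ranges and the linear coefficient $k!\,\alpha\,h_1\cdots h_{k-1}$), and the one-dimensional $\min$-sum estimate, which is where the Dirichlet hypothesis \eqref{dirichlet} actually enters and where the precise shape $\tfrac1q+\tfrac1N+\tfrac q{N^k}$ is produced.
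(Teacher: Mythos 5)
Your proposal is correct and follows the canonical Weyl-differencing route --- squaring $k-1$ times to linearize the phase (with the $(2N)^{2^{k-1}-k}$ bookkeeping), geometric-series bound $\min(N,\|k!\,\alpha h_1\cdots h_{k-1}\|^{-1})$ uniform in $x$, divisor-bound collapse to a one-dimensional sum, and the block-of-length-$q$ estimate $\sum_{m\le M}\min(N,\|\alpha m\|^{-1})\lesssim (M/q+1)(N+q\log q)$ --- which is exactly the proof in the cited source \cite{montgomery}; the paper itself gives no proof of this theorem, only that citation, so your argument coincides with the intended one. Your side remark that the hypothesis must be read with $(a,q)=1$ is a genuine and correct sharpening of the statement as printed (with $a=0$, $t=0$, and $q\approx N^{k/2}$ the stated bound fails), and it is harmless in the paper's applications, where the $q$'s arise from continued-fraction convergents and are automatically in lowest terms.
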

Note that in this case,
$$
e^{it (-i \partial_x)^k}g \in \Big[\bigcap_{\epsilon>0} B_{\infty,\infty}^{2^{1-k}-\epsilon}(\T)\Big] \bigcap C^0(\T).
$$
Combining the results above with the smoothing theorem of \cite{erdtzi}, we have the following:
\begin{cor}
Let  $g:\T \to \C $ be of bounded variation. Then  the solution of KdV with data $g$ is a continuous function of $x$ for almost every $t$. Moreover if in addition  $g\not \in \bigcup_{\epsilon>0}H^{\frac12+\epsilon}$, then for almost all $t$   the graph of
the solution has upper Minkowski dimension $D\in [\frac54,\frac74]$.  
\end{cor}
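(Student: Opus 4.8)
The plan is to deduce the corollary from the linear (Airy) result of Theorem~\ref{thm:higherdispersion} in the case $k=3$, by peeling off the nonlinear interaction using the smoothing estimate of \cite{erdtzi}. Write the KdV solution through Duhamel's formula as
$$ u(t) = S(t)g + \mathcal{D}(t), $$
where $S(t)g$ is the linear Airy evolution and $\mathcal{D}(t)$ is the nonlinear Duhamel term. Up to the sign of $t$, $S(t)g$ coincides with the propagator $e^{it(-i\partial_x)^3}g$ of Theorem~\ref{thm:higherdispersion}; since every dimension statement there holds on a full-measure set of $t$ that is invariant under $t\mapsto -t$, those conclusions transfer verbatim to $S(t)g$. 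Because $g$ is of bounded variation we have $\widehat g(k)=O(1/k)$, hence $g\in H^{\frac12-\epsilon}$ for all $\epsilon>0$, and under the extra hypothesis $\sup\{s:g\in H^s\}=\frac12$; global existence of $u$ for such data is furnished by the well-posedness theory for KdV on $\T$.

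The key input is the smoothing theorem of \cite{erdtzi}: the Duhamel term is smoother than the data by (almost) a full derivative, so that $\mathcal{D}(t)\in H^{\frac12+a}(\T)$ for some $a>\frac14$ and almost every $t$. By the one-dimensional embeddings $H^s(\T)\hookrightarrow C^{s-\frac12}(\T)=B^{s-\frac12}_{\infty,\infty}(\T)$ and $H^s(\T)\hookrightarrow B^s_{1,\infty}(\T)$ — the latter valid on the torus because $\|P_j f\|_{L^1(\T)}\les\|P_j f\|_{L^2(\T)}$ — it follows that for almost every $t$,
$$ \mathcal{D}(t)\in C^{\frac14+}(\T)\cap B^{\frac34+}_{1,\infty}(\T). $$
Thus the Duhamel remainder is strictly smoother than $S(t)g$ in exactly the two scales, $C^\alpha=B^\alpha_{\infty,\infty}$ and $B^s_{1,\infty}$, that govern the Minkowski dimension.

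For the upper bound and continuity, recall from Theorem~\ref{thm:higherdispersion} that for a.e.\ $t$ the real and imaginary parts of $S(t)g$ lie in $\bigcap_{\epsilon>0}C^{\frac14-\epsilon}$ and are continuous. Since taking real and imaginary parts is linear, $C^\alpha$ is closed under addition, and $\mathcal{D}(t)\in C^{\frac14+}$, the corresponding parts of $u(t)$ lie in $C^{\frac14-\epsilon}$ for every $\epsilon>0$ and are continuous; as the graph of a $C^\alpha$ function has upper Minkowski dimension at most $2-\alpha$, this gives $D\le\frac74$. For the lower bound, the proof of Theorem~\ref{thm:higherdispersion} shows that for a.e.\ $t$ these parts of $S(t)g$ do not belong to $\bigcup_{\epsilon>0}B^{\frac34+\epsilon}_{1,\infty}$. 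If the corresponding part of $u(t)$ lay in $B^{\frac34+\epsilon}_{1,\infty}$ for some $\epsilon>0$, then $S(t)g=u(t)-\mathcal{D}(t)$ would too, a contradiction; hence it is not in $\bigcup_{\epsilon>0}B^{\frac34+\epsilon}_{1,\infty}$, and the Deliu--Jawerth theorem yields $D\ge 2-\frac34=\frac54$.

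The only real work is hidden in the smoothing step: one must verify that the estimate of \cite{erdtzi} holds along the global flow at this low (sub-$H^1$) regularity and yields a gain strictly exceeding $\frac14$ derivative. I expect this to be the main obstacle, and I would invoke \cite{erdtzi} as a black box for it; note that any gain $a>\frac14$ suffices, so the quantitatively stronger (near full-derivative) smoothing of \cite{erdtzi} is comfortably enough. Once $a>\frac14$ is in hand, the rest is soft: intersect the full-measure $t$-set on which the number-theoretic bound (Corollary~\ref{numtheorycor} with $k=3$) and the smoothing estimate both hold, and combine the linear result with the torus Sobolev/Besov embeddings and the triangle inequality.
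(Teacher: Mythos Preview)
Your proposal is correct and follows exactly the approach the paper indicates: the paper states the corollary immediately after the sentence ``Combining the results above with the smoothing theorem of \cite{erdtzi}, we have the following,'' and gives no further argument, so the intended proof is precisely the Duhamel decomposition you wrote out---linear Airy result (Theorem~\ref{thm:higherdispersion} with $k=3$) plus the smoothing of \cite{erdtzi} for the nonlinear part. Your added details (the Besov embeddings, the triangle-inequality transfer of the $B^{3/4+}_{1,\infty}$ non-membership, and the observation that any smoothing gain strictly larger than $\tfrac14$ suffices) are exactly what the paper leaves implicit.
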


\section{Fractal solutions of the periodic vortex filament equation}

In this section we will construct  solutions of VFE with some fractal behavior via the SM equation \eqref{sm}.
For the existence and uniqueness of smooth solutions of SM see the discussion in \cite{JS}.

Let $u:\T\to S^2$ be a smooth map and denote by  $\Gamma(u)_x^y$ the parallel transport along $u$ between the points $u(x)$ and $u(y)$. We say $u$ is identity holonomy if $\Gamma(u)_0^{2\pi}$ is the identity map on the tangent space $T_{u(0)}S^2$. For $u\in H^s$ for some $s\geq 1$, we say $u$ is identity holonomy if there is a sequence of smooth and identity holonomy maps $u_n$ converging to $u$ in $H^s$.
Recall that for smooth planar initial curves for VFE, the data for SM is identity holonomy and mean zero.

\begin{theorem}
\label{hs32}
The initial value problem for the Schr\"odinger map equation:
\begin{align*}
&u_t=u\times u_{xx}, \,\,\,\,\,x\in\T, t\in \R,\\
&u(x,0)=u_0(x) \in H^s(\T),
\end{align*}
is globally well-posed for $s>\frac32$ whenever $u_0$ is identity holonomy and mean-zero. Moreover, there is a unique (up to a rotation) frame $\{e,u\times e\}$, with $e\in H^s$, and a complex valued function (unique up to a modulation), $q_0\in H^{s-1}(\T)$, so that the evolution of the curvature vector $\gamma_{xx}=u_x$ satisfies
$$
u_x= q_{1} \, e  + q_{2 } \, u\times e,
$$
where $q=q_1+iq_2$ solves NLS with data $q_0$. In particular, the curvature $|u_x|$ is given by $|q|$.
\end{theorem}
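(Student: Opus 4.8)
The plan is to run the parallel-frame (Coulomb gauge) construction of \cite{csu} and \cite{nsvz} in both directions: from the datum $u_0$ to an NLS datum $q_0$, and from the global NLS solution $q$ back to a solution $u$ of SM. First I would set up the frame at $t=0$. For smooth, mean-zero, identity-holonomy $u_0$ I solve the parallel-transport ODE $\partial_x e_0=-(u_{0,x}\cdot e_0)\,u_0$ with any unit $e_0(0)\perp u_0(0)$, and set $v_0=u_0\times e_0$. The identity-holonomy hypothesis is exactly the condition that makes this solution $2\pi$-periodic, so that $\{e_0,v_0\}$ is a genuine frame on $\T$; the freedom in $e_0(0)$ is the ``up to rotation'' freedom in the statement. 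Defining $q_0=q_{0,1}+iq_{0,2}$ by $q_{0,1}=u_{0,x}\cdot e_0$ and $q_{0,2}=u_{0,x}\cdot v_0$, the residual phase ambiguity is the ``up to modulation'' freedom. Because $u_{0,x}\in H^{s-1}$ and $H^{s-1}(\T)$ is a Banach algebra precisely when $s>\frac32$, the frame ODE is solvable with $e_0\in H^s$ and $q_0\in H^{s-1}$; this is where the threshold $s>\frac32$ enters.

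Next I would derive the NLS. Writing $u_x=q_1 e+q_2 v$, the parallel conditions give $e_x=-q_1 u$ and $v_x=-q_2 u$, hence $u_{xx}=-|q|^2u+q_{1,x}e+q_{2,x}v$ and, from $u_t=u\times u_{xx}$,
\[
u_t=-q_{2,x}\,e+q_{1,x}\,v .
\]
Imposing that the frame stay parallel for all $t$ forces $e_t=q_{2,x}u+\beta v$ and $v_t=-q_{1,x}u-\beta e$ with $\beta=-\tfrac12|q|^2+C(t)$, and the compatibility $u_{xt}=u_{tx}$ (equivalently $e_{xt}=e_{tx}$) yields
\[
q_{1,t}=-q_{2,xx}+\beta q_2,\qquad q_{2,t}=q_{1,xx}-\beta q_1,
\]
that is $iq_t+q_{xx}+\tfrac12|q|^2q=C(t)q$; absorbing $C(t)$ into a time modulation of $q$ produces the cubic NLS. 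I would also record that the SM flow preserves the mean-zero constraint, since $\partial_t\int_\T u\,dx=\int_\T u\times u_{xx}\,dx=\int_\T\partial_x(u\times u_x)\,dx=0$, together with the conserved $L^2$ and $H^1$ norms noted in the introduction.

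For the construction itself I would reverse the procedure. Given $q_0\in H^{s-1}$ I solve the cubic NLS, which is globally well-posed in $H^{s-1}$ since $s-1>\frac12$, to get $q\in C(\R,H^{s-1})$ with $\|q(t)\|_{H^{s-1}}$ finite for all $t$. Reconstructing $u$ means solving the two compatible linear systems above: propagate the frame $(u,e,v)(0,t)$ in $t$ at $x=0$, then solve the spatial system in $x$ and set $u_x=q_1e+q_2v$. Using the algebra property once more, $\|u(t)\|_{H^s}$ is controlled by $\|q(t)\|_{H^{s-1}}$ and the conserved $L^2$, $H^1$ norms, giving $u\in C(\R,H^s)$; periodicity of $q$ and conservation of the monodromy of the spatial system along the flow keep $u$ $2\pi$-periodic and identity-holonomy. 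Uniqueness and continuous dependence then transfer from the corresponding properties of NLS in $H^{s-1}$ through the correspondence $u\leftrightarrow q$.

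The main obstacle is the low-regularity reconstruction. For smooth data every step is classical --- in particular the time-propagation of $(u,e,v)$ at $x=0$ uses $q_x$ pointwise, which is legitimate for smooth $q$ --- and the a priori bounds involve only $\|q\|_{H^{s-1}}$. The real work is to run this for smooth approximations $u_0^{(n)}\to u_0$ in $H^s$, to close the estimate $\|u\|_{H^s}\lesssim F(\|q\|_{H^{s-1}})$ uniformly in $n$, and to show that the strong $C_tH^s$ limit is a genuine SM solution which still satisfies the mean-zero and identity-holonomy constraints when only $q\in H^{s-1}$ (so $q_x\in H^{s-2}$) survives in the limit. That uniform control of the frame and of the endpoint values $(u,e,v)(0,t)$, rather than the algebraic Hasimoto computation, is where the analysis is delicate.
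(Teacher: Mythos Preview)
Your overall framework---parallel frame, Hasimoto-type reduction to NLS, smooth approximation, pass to the limit---is the same as the paper's, and your identification of the gauge freedoms and of $s>\tfrac32$ via the algebra property of $H^{s-1}$ is correct. But the reconstruction step you sketch differs from the paper's in a way that leaves a genuine gap.

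You propose to propagate $(u,e,v)(0,t)$ in time at the single point $x=0$, then integrate the spatial system in $x$. The paper instead solves, for smooth data, the \emph{time} ODE
\[
\partial_t u = p_1 e + p_2\, u\times e,\qquad \partial_t e = -p_1 u - \tfrac12|q|^2\, u\times e
\]
simultaneously at every $x$ (Theorem~\ref{system}), and then proves that the spatial identities $u_x=q_1e+q_2\,u\times e$, $e_x=-q_1 u$ persist in time. This is not a cosmetic difference. Your time evolution at $x=0$ requires evaluating $p=iq_x$ at a point, and for $q\in H^{s-1}$ with $\tfrac32<s<2$ the derivative $q_x$ is only in $H^{s-2}\subset H^{-1/2+}$, a genuine distribution; so the limiting endpoint values $(u,e)(0,t)$ have no obvious meaning, and you have no mechanism to show they converge as your smooth approximants do. The paper's choice avoids pointwise evaluation entirely: the time ODE is treated as an evolution of the \emph{functions} $u(\cdot,t),e(\cdot,t)$, and the difference estimates are proved by integrating over $\T$.

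This is exactly where the hard analysis sits, and your proposal stops short of it. The paper's engine is Proposition~\ref{boot}: for two smooth solutions $(u,e,q_u)$ and $(v,f,q_v)$ one sets
\[
k^2(t)=|P_0(e-f)|^2+\|u-v\|_{H^{s-2}}^2
\]
and proves the Gronwall inequality $k'(t)\lesssim \|u_0-v_0\|_{H^s}+k(t)$. The point is that $\partial_t u$ and $\partial_t e$ contain $p=iq_x$, which at face value costs a derivative; the paper recovers it by integrating by parts over $\T$ (e.g.\ $\int_\T p_{u,1}u\,dx=-\int_\T q_{u,2}u_x\,dx$) and then invoking the product estimate of Lemma~\ref{frac}, $\|fg\|_{H^\alpha}\lesssim\|f\|_{H^{1/2+}}\|g\|_{H^\alpha}$ for $|\alpha|\le\tfrac12$, together with the algebraic bootstrap of Lemma~\ref{difference} relating $\|u-v\|_{H^s}$ and $\|e-f\|_{H^s}$ to $k(t)$. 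Once $k(t)\lesssim\|u_0-v_0\|_{H^s}$, the approximating sequence is Cauchy in $C^0_tH^s$ and the limit is the desired solution. Your ``transfer uniqueness and continuous dependence from NLS through $u\leftrightarrow q$'' is not automatic, because the correspondence passes through the frame, which depends on the solution; the paper closes this loop precisely by controlling $u-v$ and $e-f$ together via $k(t)$.
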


We start by transforming the Schr\"odinger map equation to a system of ODEs following \cite{csu} and \cite{nsvz}. Let $u_0:\T\to S^2$ be smooth, mean-zero and identity holonomy. We pick a unit vector $e_0(0)\in T_{u_0(0)}S^2$, and define $e_0:\T \to S^2 $ by parallel transport
\begin{equation}
\label{e0}
e_0(x)=\Gamma(u_0)_0^x [e_0(0)], \, x\in \T. 
\end{equation}
Notice that $e_0$ is $2\pi$-periodic since $u_0$ is identity holonomy.
We remark that for each $x$, $\{e_0(x), u_0(x)\times e_0(x)\}$ is an orthonormal basis for $T_{u_0(x)}S^2$, and $\{u_0(x), e_0(x), u_0(x)\times e_0(x)\}$ is an orthonormal basis for $\R^3$.
Therefore, we can write  $\partial_x u_0(x)$ in this frame as
$$
\partial_x u_0(x) = q_{1,0}(x)\, e_0(x) + q_{2,0}(x)\, u_0(x)\times e_0(x).
$$
Noting that $\partial_x e_0(x)$ is a scalar multiple of $u_0(x)$, and using the identity
$$
0=\partial_x [u_0(x)\cdot e_0(x)]=[\partial_x  u_0(x)] \cdot e_0(x) + u_0(x)\cdot \partial_x [ e_0(x)]=q_{1,0}+u_0(x)\cdot \partial_x [ e_0(x)],
$$
we write
$$
\partial_x e_0(x)=-q_{1,0}(x) u_0(x).
$$   
We remark that both $q_{1,0}$ and $q_{2,0}$ are $2\pi$-periodic and we define $q_0:=q_{1,0}+iq_{2,0}:\T\to \C$.

Let $q:\T\times \R \to \C$ be the smooth solution of the focusing cubic NLS equation
\begin{align}\label{nls}
iq_t+q_{xx}+\frac12|q|^2q=0,
\end{align}
with initial data $q(x,0)=q_0(x).$ We also define 
\begin{equation}\label{pq}
p=p_1+ip_2:=i\partial_x q.
\end{equation}
The following theorem exemplifies the connection between NLS and VFE. We note again that   the coordinates of the curvature vector $\gamma_{xx}=u_x$ in the frame $\{e,u\times e\}$ is given by the real and imaginary parts of the solution of NLS.
\begin{theorem}\label{system}
Let $u_0, e_0, q,$ and $p$  be as above.  Let $u,e$ solve the system of ODE
\begin{align}
\label{ut} &\partial_t u = p_1 e +p_2 u\times e  \\
\label{et} &\partial_t e =-p_1 u -\frac12 |q|^2 u\times e\\
&e(x,0)=e_0(x),\,\,\,u(x,0)=u_0(x). \nn
\end{align} 
Then we have  $ u:\T\times \R \to S^2$, $e$ is $2\pi$-periodic in 
$x$, and for each $x,t$,  $e(x,t)\in T_{u(x,t)}S^2$,   $\|e(x,t)\|=1$, and  
\begin{align}
\label{ux} &\partial_x u=  q_{1} e  + q_{2 } \, u\times e\\
\label{ex} &\partial_x e= -q_1 u.
\end{align}
Moreover, $u$ is the unique $2\pi$-periodic smooth solution of the Schr\"odinger map equation 
\begin{align}\label{sm2}
u_t=u\times u_{xx},
\end{align}
with initial data $u(x,0)=u_0(x)$.  
\end{theorem}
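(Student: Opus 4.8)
The plan is to read \eqref{ut}--\eqref{et} as a system of ODEs in $t$ for the pair $(u,e)\in\R^3\times\R^3$, with $x$ entering only as a parameter. Since $q$ is the smooth NLS solution, the coefficients $p_1,p_2,|q|^2$ are smooth, so for each fixed $x$ there is a unique global solution, smooth jointly in $(x,t)$ by smooth dependence on parameters and initial data. Every remaining assertion is then obtained by invoking uniqueness for ODEs, applied in three different guises.

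First I would establish the frame structure. Setting $A=u\cdot u$, $B=u\cdot e$, $C=e\cdot e$ and differentiating in $t$ using \eqref{ut}--\eqref{et} together with $u\cdot(u\times e)=e\cdot(u\times e)=0$, one finds the closed linear system $\partial_t A=2p_1B$, $\partial_t B=p_1(C-A)$, $\partial_t C=-2p_1B$. The constant triple $(A,B,C)\equiv(1,0,1)$ solves this system and matches the initial data (as $|u_0|=|e_0|=1$ and $u_0\cdot e_0=0$), so by uniqueness $|u|\equiv1$, $u\cdot e\equiv0$, $|e|\equiv1$ for all $t$; this is exactly $u:\T\times\R\to S^2$, $e(x,t)\in T_{u(x,t)}S^2$, and $\|e(x,t)\|=1$. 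For the $2\pi$-periodicity in $x$ of $e$ (and $u$), I would note that $x\mapsto(u(x+2\pi,t),e(x+2\pi,t))$ solves the same ODE as $x\mapsto(u(x,t),e(x,t))$, because $q$ and $p$ are $2\pi$-periodic in $x$, and with the same initial data, because $u_0,e_0$ are $2\pi$-periodic; uniqueness forces the two to agree.

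The heart of the proof is \eqref{ux}--\eqref{ex}. I would introduce the defect vectors
$$v:=\partial_x u-q_1e-q_2\,u\times e,\qquad w:=\partial_x e+q_1u,$$
which vanish at $t=0$ by the construction of $e_0$ and $q_0$ (parallel transport gives $\partial_x e_0=-q_{1,0}u_0$, with no component along $u_0\times e_0$). Differentiating $v,w$ in $t$, commuting $\partial_t\partial_x=\partial_x\partial_t$, substituting \eqref{ut}--\eqref{et}, and re-expressing the resulting spatial frame-derivatives through $v$, $w$ and $\{u,e,u\times e\}$, one reorganizes everything as a linear combination of $v$ and $w$ plus inhomogeneous source terms. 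The crucial point is that these source terms cancel identically once one uses $p=i\partial_x q$ (equivalently $p_1=-\partial_x q_2$, $p_2=\partial_x q_1$) together with \eqref{nls} to eliminate $\partial_t q_1,\partial_t q_2$; this cancellation is precisely the zero-curvature compatibility condition of the Hasimoto/AKNS pair, and it is here that the choice of NLS as the evolution of $q$ is forced. What survives is a homogeneous linear ODE $\partial_t(v,w)=\mathcal M(x,t)\,(v,w)$ with zero initial data, so $v\equiv w\equiv0$ and \eqref{ux}--\eqref{ex} hold. Verifying this cancellation is the main obstacle and the only genuinely computational step.

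Finally, granting \eqref{ux}--\eqref{ex}, I would differentiate once more: using $\partial_x(u\times e)=-q_2u$ one gets $u_{xx}=-|q|^2u+(\partial_x q_1)\,e+(\partial_x q_2)\,u\times e$, whence $u\times u_{xx}=-(\partial_x q_2)\,e+(\partial_x q_1)\,u\times e=p_1e+p_2\,u\times e=u_t$ by \eqref{ut}. Thus $u$ is a smooth $2\pi$-periodic solution of \eqref{sm2} with data $u_0$. Uniqueness among smooth $2\pi$-periodic solutions then follows from the standard energy estimate for the difference of two solutions (the top-order term is annihilated by $a\cdot(a\times b)=0$), or may be quoted from the smooth theory in \cite{JS}.
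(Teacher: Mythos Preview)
Your proposal is correct and follows essentially the same route as the paper: the frame identities $|u|=|e|=1$, $u\cdot e=0$ via the closed linear system in the dot products, the spatial compatibility \eqref{ux}--\eqref{ex} via the defect vectors $v,w$ (the paper's $f,g$) satisfying a homogeneous linear ODE in $t$ thanks to NLS and $p=i\partial_x q$, and then the direct verification of \eqref{sm2}. Your ordering is slightly cleaner (the paper checks \eqref{sm2} first, assuming \eqref{ux}--\eqref{ex}, and only afterward proves them), and you make explicit the $2\pi$-periodicity argument and the uniqueness step for \eqref{sm2}, but the content is the same.
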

\begin{proof}
First we check the last assertion of the theorem by showing that $ u$ solves \eqref{sm2}. Using \eqref{ux} and \eqref{ex}, we have
\begin{align*}
&u\times u_{xx}=(q_1)_x u\times e+q_1 u\times e_x + (q_2)_x u\times (u\times e) + q_2 u\times (u_x\times e) + q_2 u\times (u\times e_x)\\
& =p_2 u\times e+p_1 e =u_t.
\end{align*}
Next note that $u\cdot u=e\cdot e=1$, and $u\cdot e=0$ for all times since these quantities initially take these values and they solve the linear system
\begin{align*}
&\partial_t (e\cdot e) = -2 p_1 \, u\cdot e\\
&\partial_t (u\cdot u) = 2p_1 \,u \cdot e\\
&\partial_t (e\cdot u)= p_1 \, (e\cdot e -u\cdot u). 
\end{align*}
Therefore the local solutions of the system of ODE \eqref{ut}, \eqref{et} are $S^2$ valued, and hence they extend globally in time. Moreover, since they satisfy $e \cdot u=0$, $e(x,t)\in T_{u(x,t)}S^2$ for all $x, t$. 

\vskip 0.05in
\noindent
We   now prove that the identities \eqref{ux} and \eqref{ex} hold true for all times. Let 
$$f(x,t):=\partial_x u- q_{1} e  - q_{2 } \, u\times e,$$
$$g(x,t):=\partial_x e+q_1 u.$$
We compute using \eqref{ut} and \eqref{et}
\begin{align*}
&g_{t} =\partial_{x}e_{t}+(q_1)_t u+q_{1}u_t\\
&\quad=\partial_{x}\Big ( -p_1u-\frac{1}{2}|q|^2u\times e\Big)+(q_1)_t u+q_1p_1e+q_1p_2u\times e\\
&\quad =-(p_1)_{x}u-p_1 u_x-\frac{1}{2}(|q|^2)_{x}u\times e\\
&\quad \quad - \frac{1}{2}|q|^2\,u_x\times e-\frac{1}{2}|q|^2u\times e_x+(q_1)_tu+q_1p_1e+q_1p_2u\times e.
\end{align*}
Using the definitions of $f$ and $g$, we obtain
\begin{multline*}
g_t=\Big( -\frac{1}{2}(|q|^2)_x+q_1p_2-p_1q_2\Big)\, u\times e+\Big(-(p_1)_x+(q_1)_t+\frac{1}{2}|q|^2q_2\Big)\, u \\ -p_1f -\frac12|q|^2 \big(f\times e +u\times g).
\end{multline*}
Using the NLS equation \eqref{nls} and the relation \eqref{pq} between $p$ and $q$,  we have that
\begin{align*}
g_t= -p_1f -\frac12|q|^2 \big(f\times e +u\times g).
\end{align*}
By performing analogous calculations one obtains
\begin{align*}
f_t=  p_1g + (p_2-q_2) \big(f\times e +u\times g).
\end{align*}
Since $f$ and $g$ are initially zero,  \eqref{ux} and \eqref{ex} hold true for all times.
\end{proof}

\begin{remark} We note that $q$ and $e$ in the system above depend on the choice of $e_0(0)$, however, because  of the uniqueness of smooth solutions, $u$ is independent of this choice. Moreover, if we pick  $\tilde{e_0}(0)=e^{i\alpha} e_0(0)$, then by the properties of parallel transport, $\tilde{e_0}(x)=e^{i\alpha} e_0(x)$ and $\tilde{e_0}(x) \times u_0(x)=e^{i\alpha} [ e_0 (x) \times u_0(x)]$ for each $x$. And hence, in this new frame, we have $\tilde {q_0}(x) =e^{-i\alpha} q_0(x)$. 
By the properties of NLS evolution and \cite{csu} (or the system above) for each $t$, we have $\tilde {q}(t) =e^{-i\alpha} q(t)$ and $\tilde{e}(x,t)=e^{i\alpha} e(x,t)$.
 \end{remark}
The following lemma, which will be proved in the appendix, relates the $H^s$ norms of the parallel transform and of the curve. 
\begin{lemma}\label{lem:pt} Fix $s\geq 1$.
Let $u,v:\T\to S^2$ be smooth and identity holonomy functions. Pick unit vectors $e(0)\in T_{u(0)}S^2$ and $f(0)\in T_{v(0)}S^2$ so that 
$$|e(0)-f(0)|\les \|u-v\|_{H^s}.$$ Let $e(x)=\Gamma(u)_0^{x}e(0)$ and $f(x)=\Gamma(v)_0^{x}e(0)$. Then\\
i) $\|e\|_{H^s(\T)} \leq C_{\|u\|_{H^s}}$, and\\
ii) $\|e-f\|_{H^s(\T)} \leq C_{\|u\|_{H^s}, \|v\|_{H^s}} \|u-v\|_{H^s}$.
\end{lemma}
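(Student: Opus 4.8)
The plan is to exploit the fact that parallel transport along $u$ is governed by the linear ODE
\[
\partial_x e = -(e\cdot \partial_x u)\, u,
\]
which follows by differentiating $e\cdot u=0$ and using that $\partial_x e$ is normal to $S^2$, hence a scalar multiple of $u$; this is exactly the relation $\partial_x e_0=-q_{1,0}u_0$ appearing in Theorem~\ref{system}. Since parallel transport preserves length we have $|e(x)|\equiv 1$, so $\|e\|_{L^\infty}=1$ and $\|e\|_{L^2}\lesssim 1$. For part (i) I would first dispose of the base case $s=1$: using $H^1(\T)\hookrightarrow L^\infty$ together with $|e|=|u|=1$,
\[
\|\partial_x e\|_{L^2}=\|(e\cdot\partial_x u)\,u\|_{L^2}\le \|e\|_{L^\infty}\|u\|_{L^\infty}\|\partial_x u\|_{L^2}\lesssim \|u\|_{H^1},
\]
so that $\|e\|_{H^1}\le C_{\|u\|_{H^1}}$.

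For general $s$ the idea is to bound $\|e\|_{H^s}\approx\|e\|_{L^2}+\|\partial_x e\|_{H^{s-1}}$ by estimating the triple product $(e\cdot\partial_x u)u$ in $H^{s-1}$. The key point, which makes the argument work even in the delicate range $1\le s\le \tfrac32$ where $H^{s-1}$ is not an algebra and $\partial_x u\notin L^\infty$, is that the outer factor $u$ carries one extra derivative: it lies in $H^s$ rather than merely $H^{s-1}$. Using the sharp one-dimensional Sobolev product rule $\|fg\|_{H^{s_0}}\lesssim\|f\|_{H^{s_1}}\|g\|_{H^{s_2}}$ (valid for $s_0\le\min(s_1,s_2)$, $s_1+s_2\ge 0$, and $s_1+s_2-s_0>\tfrac12$) twice -- first on $e\cdot\partial_x u$ with $e\in H^{s'}$, $\partial_x u\in H^{s-1}$, then multiplying by $u\in H^s$ -- I would obtain
\[
\|(e\cdot\partial_x u)\,u\|_{H^{s-1}}\lesssim \|e\|_{H^{s'}}\|u\|_{H^s}^2,\qquad s'=s-\tfrac12.
\]
Feeding this into $\|e\|_{H^s}\lesssim 1+\|e\|_{H^{s'}}\|u\|_{H^s}^2$ and bootstrapping in steps of $\tfrac12$ starting from the $H^1$ bound then yields $\|e\|_{H^s}\le C_{\|u\|_{H^s}}$ for every $s\ge 1$.

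For part (ii) I would set $d=e-f$ and subtract the two transport equations, writing the difference of the nonlinearities as
\[
\partial_x d=-(d\cdot\partial_x u)\,u-\big(f\cdot\partial_x(u-v)\big)\,u-(f\cdot\partial_x v)\,(u-v).
\]
This is the same linear operator $d\mapsto -(d\cdot\partial_x u)\,u$ acting on $d$, now driven by a source $S$ that is linear in $u-v$ and $\partial_x(u-v)$. A Grönwall estimate on the integrated equation, using $|u|=|f|=1$ and $|d(0)|=|e(0)-f(0)|\lesssim\|u-v\|_{H^s}$, first gives $\|d\|_{L^\infty}\le C_{\|u\|_{H^1},\|v\|_{H^1}}\|u-v\|_{H^s}$, hence control of $\|d\|_{L^2}$. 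Estimating $\|\partial_x d\|_{H^{s-1}}$ exactly as in part (i) -- the homogeneous term contributes $\|d\|_{H^{s'}}\|u\|_{H^s}^2$, while the source contributes $C_{\|u\|_{H^s},\|v\|_{H^s}}\|u-v\|_{H^s}$ by the same product rule (using part (i) to bound $\|f\|_{H^s}\le C_{\|v\|_{H^s}}$) -- and running the identical half-step bootstrap closes the estimate $\|e-f\|_{H^s}\le C_{\|u\|_{H^s},\|v\|_{H^s}}\|u-v\|_{H^s}$.

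I expect the main obstacle to be precisely the multiplication estimate in the low-regularity window $1\le s\le\tfrac32$: a naive Moser/Kato--Ponce bound produces the uncontrollable factor $\|\partial_x u\|_{L^\infty}$, and the whole argument hinges on recognizing that the extra derivative on the outer factor $u$ lets the sharp product rule close, at the price of replacing a one-shot estimate by the regularity bootstrap described above.
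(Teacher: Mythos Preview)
Your proposal is correct and follows a genuinely different route from the paper. The paper passes to a local chart $(U,F,V)$ of $S^2$, writes $e=\xi^1 D_1F(\tilde u)+\xi^2 D_2F(\tilde u)$, and reduces the parallel-transport equation to the linear system $\partial_x\xi^k=-\sum_{i,j}\Gamma_{ij}^k(\tilde u)\,\partial_x\tilde u_j\,\xi^i$ in the coordinate functions; the $H^s$ bound then comes from $\|\Gamma_{ij}^k(\tilde u)\|_{H^s}\lesssim\|u\|_{H^s}$ (smooth composition) together with the product estimate of Lemma~\ref{frac}, and the argument is written only for $s\in[1,2]$. You instead stay in the ambient $\R^3$ and work directly with the intrinsic equation $\partial_x e=-(e\cdot\partial_x u)\,u$, using the sharp bilinear Sobolev product rule and a half-step bootstrap. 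Your approach sidesteps the issue (glossed over in the paper's appendix) of containing $u(\T)$ in a single chart, avoids Christoffel symbols and Moser composition entirely, and handles all $s\ge1$ uniformly; the price is that you must invoke the full sharp product rule rather than the single special case in Lemma~\ref{frac}, and the bootstrap takes $\lceil 2(s-1)\rceil$ steps rather than one. The difference argument in part~(ii) follows the same pattern in both proofs: decompose $\partial_x(e-f)$ into a homogeneous piece plus a source linear in $u-v$, control the low norm first (you via Gr\"onwall on $|d(x)|$, the paper via the coordinate ODE for $\xi-\eta$), then repeat the bootstrap.
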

 
To construct the local and global solutions of SM we need the following a priori bounds:
\begin{lemma}\label{lem:nb}
Let $u,e,q$ be as in Theorem~\ref{system}. Then for any $s\geq 1$, and for each  $T$, we have
$$
\sup_{t\in (-T,T)} \big(\|e(t)\|_{H^s}+\|u(t)\|_{H^s}+\|q(t)\|_{H^{s-1}}\big)\leq C_{T,\|u_0\|_{H^s}}
$$
\end{lemma}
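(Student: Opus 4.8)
The plan is to decouple the three quantities and bound them in the order $\|q(t)\|_{H^{s-1}}$ first, then $\|u(t)\|_{H^s}$ and $\|e(t)\|_{H^s}$. The essential structural point is to use the spatial relations \eqref{ux}--\eqref{ex}, which express $\partial_x u$ and $\partial_x e$ through $q$ itself, rather than the temporal equations \eqref{ut}--\eqref{et}: the latter involve $p=i\partial_x q$, so a direct $H^s$ energy estimate in $t$ would lose a derivative since $p\in H^{s-2}$. I also record that, by Theorem~\ref{system}, $u$, $e$ and $w:=u\times e$ are unit vectors for all $x,t$, so the frame $V:=(u,e,w)$ satisfies $|V|\equiv\sqrt3$ pointwise; this a priori $L^\infty$ control is what will make the reconstruction estimate close.

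For the control of $q$ I would first bound the data: from the frame coordinate formulas $q_{1,0}=\partial_x u_0\cdot e_0$ and $q_{2,0}=\partial_x u_0\cdot(u_0\times e_0)$, together with Lemma~\ref{lem:pt}(i) giving $\|e_0\|_{H^s}\le C_{\|u_0\|_{H^s}}$, the algebra estimate $\|u_0\times e_0\|_{H^s}\lesssim\|u_0\|_{H^s}\|e_0\|_{H^s}$, and the multiplier estimate $\|fg\|_{H^{s-1}}\lesssim\|f\|_{H^{s-1}}\|g\|_{H^s}$ (valid for $s>\tfrac12$ since $|s-1|\le s$), one obtains $\|q_0\|_{H^{s-1}}\le C_{\|u_0\|_{H^s}}$. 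Since $q$ solves the cubic NLS \eqref{nls} with data $q_0\in H^{s-1}$ and $s-1\ge0$, the global well-posedness and persistence of regularity for NLS on $\T$ give $\sup_{|t|<T}\|q(t)\|_{H^{s-1}}\le C_{T,\|q_0\|_{H^{s-1}}}$: mass conservation handles $s=1$, for $s>1$ the one-dimensional subcritical Gagliardo--Nirenberg inequality shows that the conserved mass and energy bound $\|q\|_{H^1}$ even in the focusing case, and the higher $H^{s-1}$ norm then grows at most polynomially in $t$.

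For the reconstruction of $u$ and $e$ I would work at a fixed time $t$ and use \eqref{ux}--\eqref{ex} together with $\partial_x w=-q_2u$, so that $V$ solves the linear $x$-ODE $\partial_xV=MV$ whose coefficient matrix $M$ has entries $\pm q_1,\pm q_2\in H^{s-1}$. Since $\widehat{\partial_xV}(n)=in\widehat V(n)=\widehat{MV}(n)$, one has the exact identity $\|V\|_{\dot H^s}=\|MV\|_{\dot H^{s-1}}$. I would then estimate the right-hand side by the fractional Leibniz rule, placing all derivatives on the rough factor in the leading term, schematically
$$
\|MV\|_{\dot H^{s-1}}\lesssim \|q\|_{\dot H^{s-1}}\|V\|_{L^\infty}+\|q\|_{L^2}\,\big\|V\big\|_{\dot H^{s-1/2+\epsilon}}.
$$
Using $\|V\|_{L^\infty}=\sqrt3$, interpolating $\|V\|_{\dot H^{s-1/2+\epsilon}}\lesssim\|V\|_{L^2}^{\theta}\|V\|_{\dot H^s}^{1-\theta}$ with $1-\theta<1$, and absorbing the resulting sublinear power of $\|V\|_{\dot H^s}$ by Young's inequality, I obtain $\|V\|_{\dot H^s}\le C_{\|q(t)\|_{H^{s-1}}}$; combined with the trivial bound $\|V\|_{L^2}\le\sqrt{6\pi}$ this yields $\|u(t)\|_{H^s}+\|e(t)\|_{H^s}\le C_{\|q(t)\|_{H^{s-1}}}$.

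Combining the two steps gives $\sup_{|t|<T}\big(\|e(t)\|_{H^s}+\|u(t)\|_{H^s}+\|q(t)\|_{H^{s-1}}\big)\le C_{T,\|u_0\|_{H^s}}$. The main obstacle is closing the reconstruction estimate: the naive multiplier bound only gives $\|V\|_{\dot H^s}\lesssim\|q\|_{H^{s-1}}\|V\|_{H^s}$, which is circular with a coefficient that is not small, and a Neumann-series/Picard iteration in $x$ would fail to converge for large $\|q\|_{H^{s-1}}$. The resolution is exactly the fractional-Leibniz split above, which isolates a genuinely lower-order dependence on $V$ (fewer than $s$ derivatives, hence a sublinear power of $\|V\|_{\dot H^s}$ after interpolation) with a controlled coefficient, while the a priori pointwise bound $|V|\equiv\sqrt3$ feeds the leading term; Young's inequality then absorbs the circular contribution. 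This is the same mechanism underlying the frame bound in Lemma~\ref{lem:pt}.
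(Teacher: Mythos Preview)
Your argument is correct, and the reconstruction step is genuinely different from the paper's. The paper does not use the refined Kato--Ponce split and interpolation/absorption you describe; instead it invokes the \emph{dynamical} conservation law $\|u(t)\|_{H^1}=\|u_0\|_{H^1}$ for the Schr\"odinger map, and then simply uses the product estimate $H^{s-1}\cdot H^1\hookrightarrow H^{s-1}$ (valid for $s\le 2$, via Lemma~\ref{frac} when $s\le 3/2$) in the spatial relations. Concretely, from $\partial_x e=-q_1 u$ one gets $\|e\|_{H^s}\lesssim 1+\|q\|_{H^{s-1}}\|u\|_{H^1}$, which is already closed since $\|u\|_{H^1}$ is conserved; then $\partial_x u=q_1 e+q_2\,u\times e$ gives $\|u\|_{H^s}\lesssim \|q\|_{H^{s-1}}\|e\|_{H^1}\|u\|_{H^1}$, again closed.

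What each approach buys: the paper's route is shorter and more elementary---one line per quantity once you remember the $H^1$ conservation---but as written it is tailored to $1\le s\le 2$ and needs an easy induction for larger $s$. Your route does not use the $H^1$ conservation law at all; the only a priori input is the \emph{algebraic} fact $|V|\equiv\sqrt3$ from orthonormality of the frame. The price is the fractional Leibniz inequality with the asymmetric split
\[
\|qV\|_{\dot H^{s-1}}\lesssim \|q\|_{\dot H^{s-1}}\|V\|_{L^\infty}+\|q\|_{L^2}\|V\|_{H^{s-1/2+\epsilon}},
\]
followed by interpolation and Young's inequality to absorb the sublinear power of $\|V\|_{H^s}$; this works uniformly in $s\ge 1$ without iteration. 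Both arguments begin the same way (bounding $\|q_0\|_{H^{s-1}}$ via Lemma~\ref{lem:pt} and then $\|q(t)\|_{H^{s-1}}$ via NLS theory), so the divergence is only in how the circularity of the frame estimate is broken.
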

\begin{proof}
First recall the conservation law 
$$
\|u\|_{H^1} =\|u_0\|_{H^1}.
$$
By \eqref{ux}, \eqref{ex}, we have
\begin{align}\label{q1q2}
q_1=-e_x\cdot u,\,\,\,\, q_2=u_x\cdot (u\times e).
\end{align}
Therefore, by the algebra property of $H^{s-1}$ (or by Lemma~\ref{frac} if $1\leq s\leq 3/2$) and Lemma~\ref{lem:pt}, we have
$$
\|q_0\|_{H^{s-1}}\les C_{\|u_{0}\|_{H^s}}.
$$
By NLS theory \cite{bourgain} we have for $t\in[-T,T]$
\begin{align}\label{qbound}
\|q\|_{H^{s-1}} \leq C_{T,\|u_{0}\|_{H^s}}.
\end{align}
Combining these with \eqref{ex} we see that for $t\in[-T,T]$
\begin{align} \label{ebound}
\|e\|_{H^s} \les 1+\|\partial_x e\|_{H^{s-1}} \les 1+  \|q\|_{H^{s-1}} \|u\|_{H^1} \leq C_{T,\|u_{0}\|_{H^s}}.
\end{align}
Similarly, using \eqref{ux}, we have for $t\in[-T,T]$
\begin{align} \label{ubound}
\|u\|_{H^s} =\|\partial_x u \|_{H^{s-1}} \leq C \|q\|_{H^{s-1}} \|e\|_{H^{1}} \|u\|_{H^1}  \leq C_{T,\|u_{0}\|_{H^s}}.
\end{align}
\end{proof}

Using the following lemma and the proposition we  construct the global solution of SM as a limit of a uniformly Cauchy sequence. 

\begin{lemma}\label{difference}
Fix $s\in (\frac32,2]$, and $T>0$.
Let $(u,e,q_u)$ and $(v,f,q_v)$ be as in Theorem~\ref{system} satisfying
$$ |e_0(0)-f_0(0)|\les \|u_0-v_0\|_{H^s}. $$
Then for $t\leq T$
$$
\|e-f\|_{\dot H^{s}} \lesssim \|u_0-v_0\|_{H^s} + \|u-v\|_{H^{s-1}},\,\,\,\,\,\, \|u-v\|_{H^{s}} \lesssim \|u_0-v_0\|_{H^s}+\|u-v\|_{H^{s-1}}+  |P_0(e-f)|,
$$
and
$$
\|e-f\|_{\dot H^{s-1}} \lesssim \|u_0-v_0\|_{H^s} + \|u-v\|_{H^{s-2}},\,\,\,\,\,\, \|u-v\|_{H^{s-1}} \lesssim \|u_0-v_0\|_{H^s}+\|u-v\|_{H^{s-2}}+  |P_0(e-f)|,
$$
with   implicit constants depending on $T, \|u_0\|_{H^s}, \|v_0\|_{H^s}$.
\end{lemma}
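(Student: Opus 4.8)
The plan is to exploit the first-order spatial identities \eqref{ux} and \eqref{ex}, which express $\partial_x u$ and $\partial_x e$ algebraically in terms of $q_1,q_2,u,e$, together with the exact relation $\|w\|_{\dot H^\sigma}=\|\partial_x w\|_{\dot H^{\sigma-1}}$ valid for any periodic $w$. This converts each homogeneous difference norm on the left into an $H^{\sigma-1}$ norm of a sum of products, which I would estimate factor by factor. Since $s\in(\tfrac32,2]$, the space $H^{s-1}$ has $s-1>\tfrac12$ and is a Banach algebra, and multiplication by an $H^{s-1}$ function is bounded on $H^\sigma$ for $|\sigma|\le s-1$ (Lemma~\ref{frac}); this is the product machinery I use throughout. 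The a priori bounds of Lemma~\ref{lem:nb} supply $\|q_u\|_{H^{s-1}},\|q_v\|_{H^{s-1}},\|u\|_{H^s},\|v\|_{H^s},\|e\|_{H^s},\|f\|_{H^s}\le C$ uniformly on $[-T,T]$, so every factor that is not itself a difference may be bounded by a constant.

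The one genuinely dynamical ingredient is the control of the NLS difference $\|q_u-q_v\|_{H^{s-1}}$ by the data. Writing $q_u=q_{1,u}+iq_{2,u}$, I would first bound the difference of the two $q$-data: from \eqref{q1q2} we have $q_{1,0}=-\partial_x e_0\cdot u_0$ and $q_{2,0}=\partial_x u_0\cdot(u_0\times e_0)$, so expanding the differences and invoking Lemma~\ref{lem:pt} (which under the hypothesis $|e_0(0)-f_0(0)|\les\|u_0-v_0\|_{H^s}$ gives $\|e_0-f_0\|_{H^s}\les\|u_0-v_0\|_{H^s}$) yields $\|q_u(0)-q_v(0)\|_{H^{s-1}}\les\|u_0-v_0\|_{H^s}$. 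The Lipschitz dependence of the cubic NLS flow in $H^{s-1}$ on $[-T,T]$ (\cite{bourgain}) then upgrades this to $\|q_u(t)-q_v(t)\|_{H^{s-1}}\les\|u_0-v_0\|_{H^s}$ for all $t\le T$. This single estimate is what lets the right-hand sides be phrased purely in terms of $\|u_0-v_0\|_{H^s}$.

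With these in hand the four estimates follow by a short bootstrap, and I would prove the $(s-1)$-level pair first. For $\|e-f\|_{\dot H^{s-1}}$ I use $\partial_x(e-f)=-q_{1,u}(u-v)-(q_{1,u}-q_{1,v})v$ and estimate in $H^{s-2}$, the first term giving $\|u-v\|_{H^{s-2}}$ and the second $\|q_u-q_v\|_{H^{s-1}}\les\|u_0-v_0\|_{H^s}$. For $\|u-v\|_{\dot H^{s-1}}$ I expand $\partial_x(u-v)$ from \eqref{ux}, splitting $u\times e-v\times f=u\times(e-f)+(u-v)\times f$ and peeling off the $q$-differences; the products produce $\|e-f\|_{H^{s-2}}$, $\|u-v\|_{H^{s-2}}$ and $\|u_0-v_0\|_{H^s}$, and I control the zero mode by $\|e-f\|_{H^{s-2}}\les|P_0(e-f)|+\|e-f\|_{\dot H^{s-1}}$, closing with the previous estimate. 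Since $|P_0(u-v)|\le\|u-v\|_{H^{s-2}}$, passing from $\dot H^{s-1}$ to $H^{s-1}$ costs only a lower-order term. The $s$-level pair is identical with every exponent raised by one, now using the algebra property of $H^{s-1}$ in place of Lemma~\ref{frac}; the term $\|e-f\|_{H^{s-1}}$ arising in the bound for $\|u-v\|_{\dot H^s}$ is reduced via $\|e-f\|_{H^{s-1}}\les|P_0(e-f)|+\|e-f\|_{\dot H^{s-1}}$ and the already-established $(s-1)$ estimate.

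The main obstacle, and the reason the terms $|P_0(e-f)|$ appear on the right, is the bookkeeping of zero modes: the identities only control $\partial_x$ of the differences, hence their homogeneous norms, whereas $e-f$ carries a genuinely undetermined mean that cannot be recovered from the spatial equations and must be retained as a separate datum (to be controlled later via the time equation \eqref{et}). Keeping the homogeneous and mean parts cleanly separated, and ensuring the coupling between the $e-f$ and $u-v$ estimates is triangular so that the bootstrap actually closes rather than circling, is the delicate part; by contrast the product estimates themselves are routine given the uniform bounds of Lemma~\ref{lem:nb} and the algebra and multiplier properties of $H^{s-1}$.
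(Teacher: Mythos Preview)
Your proposal is correct and follows essentially the same approach as the paper: both use the spatial identities \eqref{ux}, \eqref{ex} to pass from $\dot H^\sigma$ norms of $e-f$, $u-v$ to $H^{\sigma-1}$ norms of product differences, handle the $q$-difference via Lemma~\ref{lem:pt} and the Lipschitz dependence of NLS, and invoke Lemma~\ref{frac} at the $s-1$ level and the algebra property at the $s$ level. Your zero-mode bookkeeping is slightly more explicit than the paper's (which silently uses that $u,v$ are mean-zero so $\|u-v\|_{H^{s-1}}=\|\partial_x(u-v)\|_{H^{s-2}}$, and absorbs $\|e-f\|_{H^{s-2}}$ into $|P_0(e-f)|$ plus the already-established $\dot H^{s-1}$ bound), but the substance is the same.
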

\begin{proof}
First note that by using  Lemma~\ref{lem:pt},   and the identities \eqref{q1q2}, we  get
$$
\|q_u(0)-q_v(0)\|_{H^{s-1}}\leq C_{\|u_0\|_{H^s},\|v_0\|_{H^s} }   \|u_{0}-v_{0}\|_{H^s}.
$$
By Lipschitz dependence on initial data for the  NLS evolution, we have
\begin{align}\label{qcauchy}
\|q_{u}-q_{v}\|_{L^\infty_{[-T,T]} H^{s-1}_x}\leq C_{T,\|u_0\|_{H^s},\|v_0\|_{H^s}}  \|u_{0}-v_{0}\|_{H^s}.
\end{align}

By using \eqref{ex} and Lemma~\ref{frac} we have 
\begin{align}\nn
\|e-f \|_{\dot H^{s-1}}&= \|\partial_xe-\partial_x f\|_{H^{s-2}} = \|q_{v,1} v -q_{u,1} u\|_{H^{s-2}}\\
&\leq \|(q_{v,1}   -q_{u,1}) u\|_{H^{s-2}}+\| q_{u,1} (u-v)\|_{H^{s-2}} \nn \\
&\les  \|q_{v }   -q_{u }\|_{H^{s-1}} \| u\|_{H^{s}}+\| q_{u }\|_{H^{s-1}}\| u-v\|_{H^{s-2}}. \nn
\end{align}
The statement for $\|e-f\|_{\dot H^{s-1}}$  follows by \eqref{qcauchy}, and Lemma~\ref{lem:nb}.

Similarly, by \eqref{ux} and Lemma~\ref{frac}, we obtain
\begin{align}\nn
\|u-v \|_{  H^{s-1}}&= \|\partial_xu-\partial_x v\|_{H^{s-2}} \leq \|q_{u,1} e -q_{v,1} f\|_{H^{s-2}}+\|q_{u,2} u\times e -q_{v,2} v\times f\|_{H^{s-2}}\\ 
&\les  \|q_{u }   -q_{v }\|_{H^{s-1}}  + \|e-f\|_{H^{s-2}} +\| u-v\|_{H^{s-2}},\nn
\end{align}
which yields the bound for $\|u-v\|_{H^{s-1}}$.

The bounds for $H^{s}$ norms follow  from the calculations above  by using the algebra structure of Sobolev spaces    instead of  Lemma~\ref{lem:nb}.
\end{proof}

\begin{proposition}\label{boot}
Fix $s\in (\frac32,2]$, and $T>0$.
Let $(u,e,q_u)$ and $(v,f,q_v)$ be as in Theorem~\ref{system} satisfying
$$ |e_0(0)-f_0(0)|\les \|u_0-v_0\|_{H^s}. $$
Then for $t\leq T$ we have
$$
\|u-v\|_{H^{s}} \lesssim \|u_0-v_0\|_{H^s},\,\,\,\,\,\|e-f\|_{H^{s}} \lesssim \|u_0-v_0\|_{H^s}
$$
with an implicit constant depending on $T, \|u_0\|_{H^s}, \|v_0\|_{H^s}$.
\end{proposition}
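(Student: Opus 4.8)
The plan is to close a bootstrap using the estimates already assembled in Lemma~\ref{difference}. That lemma produces two ``layers'' of bounds: at the $\dot H^s/H^{s-1}$ level and at the $\dot H^{s-1}/H^{s-2}$ level, each controlling a difference of frames by the initial data plus a difference of the curves measured one derivative lower. The obstacle is the familiar one in frame-based arguments: the bound for $\|u-v\|_{H^s}$ involves $\|u-v\|_{H^{s-1}}$ (a loss of regularity fed back into the estimate) together with the zero mode $|P_0(e-f)|$, which is not controlled by any homogeneous Sobolev norm and must be handled separately. So a single application of Lemma~\ref{difference} does not close; I would run a short inductive descent to absorb the lower-order term.

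First I would estimate the low-frequency piece $|P_0(e-f)|$. Since $\partial_t e = -p_1 u - \frac12|q|^2 u\times e$ and the analogous equation holds for $f$, I would difference the two evolution equations \eqref{et}, integrate in $t$ from $0$, and use the bound $|P_0(e-f)|\le |e(0)-f(0)| + \int_0^T \|\partial_t(e-f)\|_{L^\infty}\,dt$. The integrand is a sum of products of $q$'s, $p$'s, $u$'s, $e$'s and their differences; each factor is controlled in $H^{s-1}$ (hence in $L^\infty$ since $s-1>\frac12$) by Lemma~\ref{lem:nb} and \eqref{qcauchy}, and the difference factors are controlled by $\|u-v\|_{H^{s-1}}$, $\|e-f\|_{H^{s-1}}$, and $\|q_u-q_v\|_{H^{s-1}}$. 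Using $|e_0(0)-f_0(0)|\lesssim\|u_0-v_0\|_{H^s}$, this yields
$$
|P_0(e-f)| \lesssim \|u_0-v_0\|_{H^s} + \sup_{t\le T}\big(\|u-v\|_{H^{s-1}}+\|e-f\|_{H^{s-1}}\big),
$$
with constants as in the statement.

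Next I would close the lower layer. Combining the two $\dot H^{s-1}/H^{s-2}$ inequalities of Lemma~\ref{difference} with the $P_0$ estimate above gives, after noting $\|e-f\|_{H^{s-1}}\lesssim |P_0(e-f)|+\|e-f\|_{\dot H^{s-1}}$,
$$
\|u-v\|_{H^{s-1}}+\|e-f\|_{H^{s-1}} \lesssim \|u_0-v_0\|_{H^s} + \|u-v\|_{H^{s-2}} + \|e-f\|_{H^{s-2}}.
$$
Iterating this descent: the right-hand side now involves norms at level $s-2$, which since $s\le 2$ is at or below $L^2$. At the bottom I would control $\|u-v\|_{L^2}$ and $\|e-f\|_{L^2}$ directly from the ODE system \eqref{ut}, \eqref{et} by a Gr\"onwall argument, differencing the equations and using that the coefficients $p_1,p_2,|q|^2$ are bounded in $L^\infty_{t,x}$ (again by Lemma~\ref{lem:nb}) while $\|p_u-p_v\|$, $\|q_u-q_v\|$ are controlled by \eqref{qcauchy}. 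This yields $\|u-v\|_{H^{s-1}}+\|e-f\|_{H^{s-1}}\lesssim\|u_0-v_0\|_{H^s}$.

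Finally I would feed this back into the top layer. With $\|u-v\|_{H^{s-1}}$ and $|P_0(e-f)|$ now both bounded by $\|u_0-v_0\|_{H^s}$, the first pair of inequalities in Lemma~\ref{difference} gives $\|e-f\|_{\dot H^s}\lesssim\|u_0-v_0\|_{H^s}$ and $\|u-v\|_{H^s}\lesssim\|u_0-v_0\|_{H^s}$ immediately; adding $|P_0(e-f)|$ promotes $\|e-f\|_{\dot H^s}$ to the full $\|e-f\|_{H^s}$. The main obstacle throughout is bookkeeping the zero modes and the one-derivative loss so that the feedback term genuinely lands at strictly lower regularity and the descent terminates at the Gr\"onwall-controlled $L^2$ level; the product estimates themselves are routine given $s-1>\frac12$ and Lemmas~\ref{lem:pt}, \ref{lem:nb}, and \ref{frac}.
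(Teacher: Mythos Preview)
The approach has a genuine gap. You repeatedly rely on $p_1, p_2$ being bounded in $L^\infty_{t,x}$ ``by Lemma~\ref{lem:nb}'', but that lemma only gives $q \in L^\infty_T H^{s-1}_x$; since $p = i\partial_x q$, one has merely $p \in L^\infty_T H^{s-2}_x$. For $s \in (\tfrac32, 2]$ this is at best $L^2$ in $x$ (and a genuinely negative-order space when $s<2$), certainly not $L^\infty$. This undermines both pillars of your scheme: the bound $|P_0(e-f)| \le |P_0(e_0-f_0)| + \int_0^T \|\partial_t(e-f)\|_{L^\infty}\,dt$ fails because $\partial_t e = -p_1 u - \tfrac12|q|^2 u\times e$ is not in $L^\infty_x$; and your $L^2$ Gr\"onwall at the base of the descent fails for the same reason, since differencing \eqref{ut} produces terms like $p_{v,1}(e-f)$ whose $L^2$ norm cannot be bounded by $\|p_{v,1}\|_{L^\infty}\|e-f\|_{L^2}$.

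The paper's proof confronts exactly this obstruction and resolves it in two ways. First, the Gr\"onwall is run not at the $L^2$ level but on the composite quantity $k(t)^2 := |P_0(e-f)|^2 + \|u-v\|_{H^{s-2}}^2$; at this regularity the product estimate of Lemma~\ref{frac} (with $\alpha = s-2 \in (-\tfrac12, 0]$) gives $\|p_{v,1}(e-f)\|_{H^{s-2}} \lesssim \|p_v\|_{H^{s-2}}\|e-f\|_{H^{s-1}}$, and Lemma~\ref{difference} then converts $\|e-f\|_{H^{s-1}}$ back into $\|u_0-v_0\|_{H^s} + k(t)$, so the differential inequality $k'(t)\lesssim \|u_0-v_0\|_{H^s}+k(t)$ closes. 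Second, for the evolution of $|P_0(e-f)|$, the derivative hidden in $p = i\partial_x q$ is removed by integrating by parts: $\int_\T p_{u,1} u\,dx = \int_\T q_{u,2} u_x\,dx$, after which substituting \eqref{ux} leaves only products of $q$'s, $u$'s, and $e$'s with no $p$ appearing, and Sobolev embedding suffices. Your descend-then-climb-back structure is in the right spirit, but the base step does not close without both of these fixes.
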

\begin{proof}
First note that \eqref{qcauchy} is valid.

Let $k^2(t):=|P_0(e-f)|^2+ \|u-v\|_{H^{s-2}}^2$.
By Lemma~\ref{difference}, it suffices to prove that  $k(t)\les \|u_0-v_0\|_{H^s}$ which follows from 
\begin{align}\label{kbound}
k^\prime(t)\les \|u_0-v_0\|_{H^s} +k(t).
\end{align}

We write
$$
\partial_t\big(\|u-v\|_{H^{s-2}}^2\big)=2\int_\T \partial_x^{s-2} (u-v) \cdot \partial_x^{s-2}(u_t-v_t) \, dx.
$$
Using \eqref{ut} and Cauchy--Schwarz we have
\begin{align}\label{H-1t}
\partial_t\big(\|u-v\|_{H^{s-2}}^2\big)\les \|u-v\|_{H^{s-2}} \big(\|p_{u,1}e-p_{v,1}f\|_{H^{s-2}}+\|p_{u,2} u\times e - p_{v,2} v\times f\|_{H^{s-2}} \big).
\end{align}
To estimate $\|p_{u,1}e-p_{v,1}f\|_{H^{s-2}}$, we write using \eqref{pq} and Lemma~\ref{frac}
\begin{align*}
\|p_{u,1}e-p_{v,1}f\|_{H^{s-2}}& \leq \|[(q_{v,2})_x-(q_{u,2})_x] e\|_{H^{s-2}} +\|(q_{v,2})_x (f-e)\|_{H^{s-2}}  \\
&\les \| (q_{v,2})_x-(q_{u,2})_x \|_{H^{s-2}}  \|e\|_{H^{s-1}} +\|(q_{v,2})_x\|_{H^{s-2}}  \|f-e \|_{H^{s-1}}\\
&\les  \|  q_{v }  - q_{u}  \|_{H^{s-1}} +  \| q_{v } \|_{H^{s-1}} \|f-e \|_{H^{s-1}}.
\end{align*}
Now by using Lemma~\ref{lem:nb}, Lemma~\ref{difference}, and \eqref{qcauchy}, we have   
$$
\|p_{u,1}e-p_{v,1}f\|_{H^{s-2}}\les \|u_0-v_0\|_{H^s}+ k(t),
$$
where the implicit constants depend on  $ \| u_0\|_{H^{s}}$, $\| v_0\|_{H^{s}}$, and $T$.

Similarly one estimates 
$$
\|p_{u,2} u\times e - p_{v,2} v\times f\|_{H^{s-2}} \les  \|u_0-v_0\|_{H^s}+ k(t).
$$
Thus, we get
$$
\partial_t\big(\|u-v\|_{H^{s-2}}^2\big)\les  \|u_0-v_0\|_{H^s} k(t) + k^2(t).
$$

Now note that by \eqref{et}, we have
$$
\partial_t |P_0(e-f)|^2 \les |P_0(e-f)|  \Big( \Big| \int_\T \big( p_{u,1}u-p_{v,1}v \big)dx \Big| +\Big|\int_\T \big(|q_u|^2u\times e -|q_v|^2v\times f\big) dx \Big| \Big).
$$
We estimate the first integral on the right hand side above by using \eqref{pq} and integration by parts as follows:
\begin{align*}
\Big| \int_\T \big( p_{u,1}u-p_{v,1}v \big)dx \Big| & = \Big| \int_\T \big( q_{u,2}u_x-q_{v,2}v_x \big)dx \Big| \\
&\les \Big| \int_\T \big( q_{u,2}q_{u,1} e- q_{v,2}q_{v,1} f  \big)dx \Big| + \Big| \int_\T \big( q_{u,2}^2 u\times  e- q_{v,2}^2 v\times f  \big)dx \Big|\\
&\les \|q_u-q_v\|_{H^{s-1}}+\|u-v\|_{H^{s-1}}+\|e-f\|_{H^{s-1}} \\ &\les \|u_0-v_0\|_{H^s}+k(t).
\end{align*}
In the first inequality we used \eqref{ux}, and in the second we bound the differences by Sobolev embedding. The last inequality follows from \eqref{qcauchy} and Lemma~\ref{difference}.

The estimate for the second integral is similar, and hence \eqref{kbound} holds.
\end{proof}

We now finish the proof of Theorem~\ref{hs32}. Given initial data $u_0$, choose a sequence  $u_{0,n}$ of smooth and identity holonomy functions converging to $u_0$ in the  $H^s$ norm. For each $u_{0,n}$ we choose $e_{0,n}(0)$ so that (for each $n,m$)
$$
|e_{0,n}(0)-e_{0,m}(0)|\lesssim \|u_{0,n}-u_{0,m}\|_{H^s}.
$$
Consider the system given by Theorem~\ref{system} for each $n$. By Proposition~\ref{boot} we see that 
for each $T$, $u_n$ and $e_n$ are Cauchy in $C^0_{[-T,T]}H^{s}$, and hence they  converge to functions $u$ and $e$ in $C^0_{[-T,T]}H^{s}$. This implies existence and uniqueness. Continuous dependence on initial data also follows immediately from Proposition~\ref{boot}. The proof of the claim on the curvature is clear from the construction above.

\section{Weak Solutions of SM on the torus.}
The construction of weak solutions in the energy space was proved in \cite{SSB}, also see the discussion in \cite{JS}.  
Certain uniqueness statements (under assumptions on NLS evolution on $\R$) were obtained in \cite{nsvz}. In this section we obtain unique weak solutions in $H^s(\T)$ for $s\geq 1$ and for identity holonomy and mean zero data. These solutions  are weakly continuous in $H^s$ and continuous in $H^r$ for $r<s$. Moreover, for $s>1$ the curvature $u_x$ is given by a NLS evolution in $H^{s-1}$ level as in the previous section.

First we  discuss weak solutions in $H^1$ for  identity holonomy and mean zero data $u_0\in H^1$. Take a smooth identity holonomy sequence $u_n(0)$ converging to $u_0$ in $H^1$. Construct solutions $u_n,e_n,q_n$ as in the previous section. 
By Lemma~\ref{lem:nb},  we have for each $T$
$$
\sup_{n,|t|\leq T} \big(\|u_n\|_{H^1}+\|e_n\|_{H^1}+\|q_n\|_{L^2}\big) \leq C_{T, \|u_0\|_{H^1}}.
$$
Moreover, by the equation,
$$
\sup_{n,|t|\leq T} \big(\|\partial_t u_n\|_{H^{-1}}+\|\partial_t e_n\|_{H^{-1}} \big) \leq C_{T, \|u_0\|_{H^1}}.
$$
Having these bounds we   apply Proposition 1.1.2 in \cite{caz} to construct a weak solution.  Taking  $X=H^1$, $Y=H^{r}$ for any $r<1$, we estimate
$$
\|u_n(t_1)-u_n(t_2)\|_{H^{-1}}\leq \int_{t_2}^{t_1} \|\partial_tu_n(t)\|_{H^{-1}} dt \leq  C_{T, \|u_0\|_{H^1}} |t_1-t_2|.
$$
Interpolating this inequality with the $H^1$ bound gives equicontinuity in $H^{r}$ for $r<1$. Similar statements hold for $e_n$'s.

The proposition yields $u\in C^0_tH^{r}$ which is weakly continuous in $H^1$ and a subsequence $n_k$ such that
$u_{n_k}$ converges to $u$ weakly in $H^1$ for each $t$, and same for $e$. By passing to further subsequences
we also have
$\partial_t u_{n_k}$ converges to $\partial_t u$ weakly in $L^2_tH^{-1}_x$, same for $\partial_t e$. The limit satisfies the system and the Schr\"odinger map equation in the sense of space-time distributions.

We further note that for each $t$, by Rellich and the uniqueness of weak limits, a subsequence $u_{n_k}$ (depending on $t$) converges to $u$ strongly in $H^{r}$. Therefore, (by Sobolev embedding)
$|u(x,t)|=1$ and similarly $|e(x,t)|=1$ and $e(x,t)\cdot u(x,t)=0$ for each $t,x$.

For the uniqueness part we use an argument from \cite{nsvz}. Given two such solutions $u$ and $v$, consider the smooth solutions $u_n$ and $v_n$ converging to $u$ and $v$ as above. 
In particular, $u_n(0)$ and $v_n(0)$ converges to the same data in $H^1$. Since the solutions are independent of the choice of the frame, we can choose $e_{0,n}(0)$ and $f_{0,n}(0)$ so that in addition to the conditions above we have 
$$
|e_{0,n}(0)-f_{0,n}(0)|\les \|u_n(0)-v_n(0)\|_{H^1}.
$$
With this choice, we consider the two weak solutions $(u,e,q_u), (v,f,q_v)$ of the system in Theorem~\ref{hs32}. Note that $u_0=v_0$, $e_0=f_0$, and $q_u(0)=q_v(0)$. By the well-posedness of NLS
$q_u=q_v=q$ for all times.    Let $U=u-v$, $E=e-f$,   $C=u\times e -v\times f$, and $V=(U,E,C)$.   
Note that $\partial_t V=BV$ (in the sense of space-time distributions), where
\begin{equation*}
B =\left[\begin{array}{ccc} 0& p_{u,1}& p_{u,2}\\  -p_{u,1}&0&-\frac12|q_u|^2\\-p_{u,2}&\frac12|q_u|^2&0
\end{array}\right] 
\end{equation*} 

Using $H^1$ and $H^{-1}$ duality, and the fact that $B$ is skew symmetric, we see that 
$$
\partial_t \|V\|_2^2= \int BV\cdot V =0.
$$
 Since $V(0)=0$, we have uniqueness.

Finally, we discuss the weak solutions in $H^s$ for $s>1$. 
By the argument above (and Proposition 1.1.2 in \cite{caz}), the weak solution is unique, it is  in $C^0_tH^1_x$, and weakly continuous in time with values in $H^s$.
In fact in this case the solution enjoys conservation of $H^1$ norm and it is the strong limit of $u_{n_k}$ in $H^1$.  
Indeed, by Rellich and the uniqueness of weak limits, at each time,  a $t$ dependent subsequence $u_{n_k}(t)$ converges to $u(t)$ in $H^1$. Therefore
$$\|u(t)\|_{H^1}=\lim_{k\to\infty} \|u_{n_k}(t)\|_{H^1} = \lim_{k\to\infty} \|u_{n_k}(0)\|_{H^1} =\|u_0\|_{H^1}.$$
Hence the full sequence $\|u_{n_k}\|_{H^1}$ converges to $\|u\|_{H^1}$ for each $t$. This also implies by applying  Proposition 1.1.2 in \cite{caz} one more time with $B=H^1$ that 
$u_{n_k}$ converges to $u$ strongly in $H^1$ for each $t$. In particular, $|\partial_x u_{n_k}(t)|$ converges to $|\partial_x u(t)|$ in $L^2$.
Also note that by the system above and by the NLS theory $|q_{n_k}(t)|=|\partial_x u_{n_k}(t)|$ converges to $|q(t)|$ in $L^2$. Therefore for each $t$, $|q(t)|=|u_x(t)|$ as $L^2$ functions.  Similarly, we have $u_x=q_1\, e+q_2\,u\times e$, for each $t$.

\section{Appendix}
\begin{lemma}\label{frac}
For $\alpha\in [-\frac12,\frac12]$, we have
$$
\|fg\|_{H^\alpha}\lesssim \|f\|_{H^{1/2+}} \|g\|_{H^\alpha}.
$$
\end{lemma}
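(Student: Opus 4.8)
The plan is to reduce to nonnegative $\alpha$ by duality and then to localize in frequency. Since multiplication by $f$ on $H^\alpha$ is, up to conjugation, the adjoint of multiplication by $\overline{f}$ on $H^{-\alpha}$, and $\|\overline{f}\|_{H^{1/2+}}=\|f\|_{H^{1/2+}}$, it suffices to prove the bound for $\alpha\in[0,\frac12]$ and then to deduce the range $\alpha\in[-\frac12,0)$ by duality: I would test $fg$ against $h\in H^{-\alpha}$, move $\overline{f}$ onto $h$, and invoke the already-proved estimate for $-\alpha\in(0,\frac12]$. The endpoint $\alpha=0$ is immediate from $\|fg\|_{L^2}\le\|f\|_{L^\infty}\|g\|_{L^2}$ together with the Sobolev embedding $H^{1/2+}(\T)\hookrightarrow L^\infty(\T)$, so the content is $\alpha\in(0,\frac12]$.

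For $\alpha\in(0,\frac12]$ I would work on the Fourier side and use the elementary weight inequality $\langle k\rangle^\alpha\le\langle k-m\rangle^\alpha+\langle m\rangle^\alpha$ (valid for $0\le\alpha\le1$) to split $\langle k\rangle^\alpha\widehat{fg}(k)$ into a term carrying the $\alpha$ derivative on $g$ and a term carrying it on $f$. The first term is harmless: it is dominated by $\big(|\widehat{f}|\ast\langle\cdot\rangle^\alpha|\widehat{g}|\big)(k)$, so Young's inequality bounds its $\ell^2$ norm by $\|\widehat{f}\|_{\ell^1}\|g\|_{H^\alpha}$, and $\|\widehat{f}\|_{\ell^1}\lesssim\|f\|_{H^{1/2+}}$ by Cauchy--Schwarz since $\sum_k\langle k\rangle^{-1-}<\infty$. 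The whole difficulty is concentrated in the second term, where the derivative lands on the rough factor $f$, which only lies in $H^{1/2+}$.

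For that term I would split the frequency $m$ of $g$ according to whether $|m|\gtrsim|k-m|$ or $|m|\ll|k-m|$. In the first (high--high) region $\langle k-m\rangle^\alpha\lesssim\langle m\rangle^\alpha$ because $\alpha\ge0$, so the derivative can be moved back onto $g$ and the term is again controlled by $\|\widehat{f}\|_{\ell^1}\|g\|_{H^\alpha}\lesssim\|f\|_{H^{1/2+}}\|g\|_{H^\alpha}$. The remaining (high--low) region, where $f$ carries the top frequency, is the main obstacle: here crude applications of Young's or Cauchy--Schwarz lose, and one must exploit that the output frequency $k$ is comparable to the $f$-frequency $k-m$. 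Equivalently, I would run a Littlewood--Paley/Bony paraproduct decomposition and estimate $\|P_j f\,P_{<j}g\|_{H^\alpha}\approx 2^{j\alpha}\|P_jf\|_{L^2}\|P_{<j}g\|_{L^\infty}$, using Bernstein $\|P_\ell g\|_{L^\infty}\lesssim 2^{\ell/2}\|P_\ell g\|_{L^2}$ and $\alpha\le\frac12$ to sum the low frequencies of $g$; almost-orthogonality of the outputs then allows $\ell^2_j$ summation against $2^{j/2}\|P_jf\|_{L^2}\sim\|f\|_{H^{1/2}}$.

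The one delicate point, and the step I expect to be the real obstacle, is the endpoint $\alpha=\frac12$ in this high--low region: there the low-frequency sum $\sum_{\ell<j}2^{\ell(1/2-\alpha)}\|P_\ell g\|_{L^2}$ is only logarithmically divergent, producing a factor $\sqrt{j}$. This is exactly where the hypothesis $f\in H^{1/2+}$, rather than merely $H^{1/2}$, is used: the spare $\epsilon$ gives $\sum_j j\,2^{j}\|P_jf\|_{L^2}^2\lesssim_\epsilon\|f\|_{H^{1/2+\epsilon}}^2$, which absorbs the logarithm. For $\alpha<\frac12$ the geometric sum converges outright and no $\epsilon$ is needed there; it is only at the top of the range, and in the $\|\widehat{f}\|_{\ell^1}$ estimate, that the strict inequality $s>\frac12$ is essential.
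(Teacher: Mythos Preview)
Your argument is correct. The paper's proof shares the duality reduction to $\alpha\ge 0$ and the Sobolev-embedding endpoint $\alpha=0$, but then diverges: instead of handling every $\alpha\in(0,\tfrac12]$ directly via the weight-splitting $\langle k\rangle^\alpha\le\langle k-m\rangle^\alpha+\langle m\rangle^\alpha$ and a paraproduct analysis, the paper proves only the top endpoint $\alpha=\tfrac12$ and recovers the intermediate values by interpolation with $\alpha=0$. At $\alpha=\tfrac12$ it stays purely on the sequence side: setting $u(k)=\langle k\rangle^{1/2+}\widehat f(k)$ and $v(k)=\langle k\rangle^{1/2}\widehat g(k)$, it splits the convolution according to whether $|k|\lesssim|n-k|$ or $|n-k|\lesssim|k|$ and closes each case with a single application of Young's inequality followed by H\"older, with no Littlewood--Paley machinery. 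Your Bony-paraproduct route makes explicit where the $+$ in $H^{1/2+}$ is consumed (the $\ell^1$ bound on $\widehat f$ and the logarithmic high--low sum at $\alpha=\tfrac12$); the paper's route is shorter and more elementary, trading the dyadic decomposition for interpolation and off-conjugate $\ell^p$ exponents.
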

\begin{proof}
First note that for $\alpha=0$ the lemma follows from Sobolev embedding theorem. Moreover if one proves the estimate for any $0<\alpha\leq \frac{1}{2}$ then the estimate for $-\frac{1}{2}\leq\alpha<0$ follows by duality. Indeed consider $h\in H^{-\alpha}$ and estimate
$$\int_{\Bbb T} fgh =\int_{\Bbb T} g (fh) \lesssim \|g\|_{H^{\alpha}}\|fh\|_{H^{-\alpha}}\lesssim \|g\|_{H^{\alpha}}\|f\|_{H^{\frac{1}{2}+}}\|h\|_{H^{-\alpha}}.$$ 
We only show the calculation for $\alpha=\frac12$, the middle range follows by interpolation.  

$$\|fg\|_{H^{\frac{1}{2}}}=\big\|\la n \ra^{\frac{1}{2}}\sum_{k}\widehat{f}(n-k)\widehat{g}(k)\big\|_{l^2}\leq \big\|\sum_{k}\la n \ra^{\frac{1}{2}}|\widehat{f}(n-k)|\ |\widehat{g}(k)|\ \big\|_{l^2}.$$
Now consider the $l^2$ functions defined by $u(k)=\widehat{f}(k)\la k\ra^{\frac{1}{2}+}$ and $v(k)=\widehat{g}(k)\la k\ra^{\frac{1}{2}}$. It suffices to show that 
$$\Big\|\sum_{k}\la n \ra^{\frac{1}{2}}\frac{u(n-k)}{\la n-k\ra^{\frac{1}{2}+}}\ \frac{v(k)}{\la k \ra^{\frac{1}{2}}}\ \Big\|_{l^2}\lesssim \|u\|_{l^2}\|v\|_{l^2}.$$
\\
{\it Case 1:} $|k|\lesssim |n-k|$. In this case $\la n \ra^{\frac{1}{2}}\lesssim \la n-k\ra^{\frac{1}{2}}$ and 
\\
$$\frac{\la n \ra^{\frac{1}{2}}}{\la n-k \ra^{\frac{1}{2}+}\la k \ra^{\frac{1}{2}}} \lesssim \frac{1}{\la n-k \ra^{+}\la k \ra^{\frac{1}{2}}}.$$
It follows that 
$$\Big\|\sum_{k}\la n \ra^{\frac{1}{2}}\frac{u(n-k)}{\la n-k\ra^{\frac{1}{2}+}}\ \frac{|v(k)|}{\la k \ra^{\frac{1}{2}}}\ \Big\|_{l^2}\lesssim \Big\|\frac{u}{\la \cdot \ra^{+}}* \frac{v}{\la \cdot \ra^{\frac{1}{2}}}\Big\|_{l^2} \lesssim \Big\|\frac{u}{\la \cdot \ra^{+}}\Big\|_{l^{2-}}\ \Big\|\frac{v}{\la \cdot \ra^{\frac{1}{2}}}\Big\|_{l^{1+}}\lesssim \|u\|_{l^2}\|v\|_{l^2}.$$
Note that in the second to last inequality we used Young's inequality and in the last inequality we used H\"older's inequality in $u$ and $v$ respectively.
\\
\\
{\it Case 2:} $|n-k|\lesssim |k|$. In this case $\la n \ra^{\frac{1}{2}}\lesssim \la k\ra^{\frac{1}{2}}$ and 
\\
$$\frac{\la n \ra^{\frac{1}{2}}}{\la n-k \ra^{\frac{1}{2}+}\la k \ra^{\frac{1}{2}}} \lesssim \frac{1}{\la n-k \ra^{\frac{1}{2}+}}.$$
It follows that 
$$\Big\|\sum_{k}\la n \ra^{\frac{1}{2}}\frac{u(n-k)}{\la n-k\ra^{\frac{1}{2}+}}\ \frac{v(k)}{\la k \ra^{\frac{1}{2}}}\ \Big\|_{l^2} \lesssim 
\Big\|\frac{u}{\la \cdot \ra^{\frac{1}{2}+}}* v\Big\|_{l^2}\lesssim \| u \, \la \cdot \ra^{-\frac12-}\|_{l^{1}}\ \|v\|_{l^{2}}\lesssim \|u\|_{l^2}\|v\|_{l^2}$$
by using Young's and H\"older's inequalities in that order.
\end{proof}

\begin{proof}[Proof of Lemma~\ref{lem:pt}] We will use the notation of \cite{bar}. By the definition of the parallel transport 
\begin{equation}
\label{pt1}
\partial_x e(x)= \big( \partial_x e(x) \cdot  u(x) \big)  u(x).
\end{equation} 
Fix a smooth local parametrization $(U,F, V)$ of $S^2$ such that $u(\T) \subset V$. Let  $\widetilde{u}:=F^{-1} \circ u: \T \rightarrow U$.
Write $e$ in the local parameters as 
\begin{equation}
\label{eu}
e(x)=\xi^1(x) D_1 F(\widetilde{u}(x))+\xi^2(x) D_2 F (\widetilde{u}(x)).
\end{equation}

We can write \eqref{pt1}  as,
\begin{equation}
\label{ptsys}
\partial_x{\xi}^k(x)=-\sum_{i,j=1}^2 \Gamma_{ij}^k (\widetilde{u}(x)) \, \partial_x\widetilde{u}_{j}(x)\xi^i (x),\, k=1,2, \, x\in \T,
\end{equation}
where $\Gamma_{ij}^k$ are the Christoffel symbols with respect to the local parametrization $(U,F,V)$.

Since $|e|=1$, we deduce that $\xi^1, \xi^2 \in L^\infty$, with a bound depending on $F$. Also note that, since $\Gamma_{ij}^k$ and $F^{-1}$ are smooth, we have
$$
\|\Gamma_{ij}^k (\widetilde{u}(x))\|_{H^s},\,\|D_jF (\widetilde{u}(x))\|_{H^s}\les 1+ \|\widetilde{u}\|_{H^s}\les \|u\|_{H^s}.
$$

Using this and Sobolev embedding in \eqref{ptsys}, we have
$$
\|\xi\|_{H^1}\les \|\xi\|_{L^\infty}+ \|\partial_x\xi\|_{L^2}\les  1+ \|u\|_{H^1}^2\|\xi\|_{L^\infty}\les \|u\|_{H^1}^2.
$$
And hence, for $s\in[1,2]$, we have
$$
\|\xi\|_{H^s}\les 1+\|\partial_x\xi\|_{H^{s-1}}\les 1+ \|u\|_{H^s}^2\|\xi\|_{H^1} \les \|u\|_{H^s}^4.
$$
Using this in \eqref{eu}, we obtain
$$
\|e\|_{H^s}\les \|u\|_{H^s}^5.
$$

For the second part, first write
$$
f(x)=\eta^1(x)D_1F(\widetilde {v}(x))+\eta^2(x)D_2F(\widetilde{v}(x)).
$$
Since 
$$
|e(0)-f(0)|\les \|u_0-v_0\|_{H^s}, \text{ and } |D_jF(\widetilde u (0)) - D_jF(\widetilde v (0))| \les \|u_0-v_0\|_{H^s},
$$
we have
$$
|\eta(0)-\xi(0)|\les  \|u_0-v_0\|_{H^s}.
$$
We also have
$$
\|\Gamma_{ij}^k (\widetilde{u}(x))-\Gamma_{ij}^k (\widetilde{v}(x))\|_{H^s},\,\|D_jF (\widetilde{u}(x))-D_jF (\widetilde{v}(x))\|_{H^s}\les \|u-v\|_{H^s}.
$$
Using these in \eqref{ptsys} as above, we obtain
$$
\|\eta-\xi\|_{H^s}\les \|u-v\|_{H^s},
$$
which implies that
$$
\|e-f\|_{H^s}\les \|u-v\|_{H^s}.
$$
\end{proof}

\end{document}